\documentclass[draft]{birkjour}
\usepackage[noadjust]{cite}
\usepackage{xcolor}
\usepackage{mathrsfs}

\usepackage{amsmath}
\usepackage{amssymb}
\usepackage{amsthm}
\usepackage{extarrows}
\RequirePackage[all]{xy}

\newtheorem{theorem}{Theorem}[section]
\newtheorem{corollary}[theorem]{Corollary}
\newtheorem{lemma}[theorem]{Lemma}

\theoremstyle{definition}
\newtheorem{definition}[theorem]{Definition}
\theoremstyle{remark}
\newtheorem*{remark}{Remark}

\numberwithin{equation}{section}

\newcommand{\BibTeX}{B\kern-0.1emi\kern-0.017emb\kern-0.15em\TeX}
\newcommand{\XYpic}{$\mathrm{X\kern-0.3em\raisebox{-0.18em}{Y}}$-$\mathrm{pic}\,$}

\newcommand{\cl}{C \kern -0.1em \ell}  

\newcommand{\bx}{\boldsymbol{x}}

\newcommand{\be}{\begin{eqnarray*}}
	\newcommand{\ee}{\end{eqnarray*}}

\allowdisplaybreaks

\newcommand{\ed}{\end{document}}
\begin{document}
	\title[Generalized poly-partial-slice monogenic functions]
	{Higher order Teodorescu transform in the theory of generalized partial-slice monogenic functions}
	\author[Manjie Hu]{Manjie Hu}
	\address{%
		School of Mathematical Sciences,\\ Anhui University, Hefei, Anhui, China}
	\email{A23201033@stu.ahu.edu.cn}
	
	\author[Chao Ding]{Chao Ding}
	%
	\address{%
		Center for Pure Mathematics, \\School of Mathematical Sciences,\\ Anhui University, Hefei, Anhui, China}
	\email{cding@ahu.edu.cn}

	\subjclass{30G35, 32A30, 44A05.}
	\keywords{Generalized poly-partial-slice monogenic functions, Generalized poly-partial-slice Cauchy kernel, Teodorescu transform, Fundamental solution, Borel-Pompeiu formula}
	\date{\today}
	\begin{abstract}
		This paper investigates the higher order Teodorescu transform in the context of generalized partial-slice monogenic functions. By introducing the generalized poly-partial-slice Cauchy kernel, we derive some integral formulas including higher order Borel–Pompeiu formula and Cauchy integral formula for generalized poly-partial-slice monogenic functions. Furthermore, we define the higher order Teodorescu transform in this context and establish its existence, continuity, and some mapping properties. These results extend classical integral methods in Clifford analysis to the generalized poly-partial-slice setting.
	\end{abstract}
	\label{page:firstblob}
	\maketitle
	\section{Introduction}\hspace*{\fill} 
	
	Clifford analysis extends complex function theory to higher-dimensional Euclidean spaces. It investigates null solutions of Dirac-type operators defined within real Clifford algebras. Monogenic functions, defined as null solutions to the Dirac operator, represent a fundamental concept. However, the non-commutative nature of Clifford multiplication typically prevents polynomial functions in higher dimensions from being monogenic. Consequently, this inherent limitation restricts the generalization of methods such as power series expansions and the classical Cauchy integral formula. The literature on monogenic functions is extensive, with standard references including \cite{Br1,Co5,Gi1,Gu3}.
	
	To address these limitations, the theory of slice monogenic functions was developed by Gentili and Struppa and later generalized by Colombo, Sabadini, Struppa, Ghiloni, Perotti, and others. These functions are defined by imposing complex analyticity on slices of the underlying space--complex subplanes parameterized by paravector variables, see \cite{Co2,Co6,Ge2,Ge1}. Notably, Colombo, Sabadini, and Struppa \cite{Co2} provided a comprehensive treatment of the functional calculus and operator theory built upon this foundation. Furthermore, the concept of slice regularity has been extended to various settings, including quaternions, octonions, Clifford algebras and real alternative *-algebras, see \cite{A1,A2,Ge3,Di,Gh0}.
	
	\par
	In \cite{Co6}, a significant advance in this direction was Colombo, Gonz\'alez-Cervantes and Sabadini's introduction of the operator $G$, a non-constant coefficient analog of the Cauchy--Riemann operator $\overline{\partial}$, and their establishment of its foundational role in defining slice monogenic functions over paravector domains. Subsequently, this global differential operator for slice regularity has gained some research attention. For example, Ghiloni and Perotti \cite{Gh1} established a framework of global differential equations for slice regular functions over real alternative algebras, refining quaternionic and slice monogenic results and generalizing them. In particular, they introduced the operators $\vartheta$ and $\overline{\vartheta}$, which play a central role in the global differential characterization of slice regularity. Later, Xu and Sabadini \cite{Xu} introduced broader classes of global differential operators with variable coefficients, exemplified by the operator $\overline{\vartheta}$, whose kernel characterizes a class of generalized slice monogenic functions. These operators by Xu and Sabadini have facilitated the development of global integral representations and differential identities paralleling those of classical complex analysis. This collective progress laid the groundwork for extending the theory toward more general differential systems and complex domain geometries. Additionally, the work of Ghiloni and Stoppato \cite{Add1, Add2} on $T$-regular functions provides a powerful unifying framework that encompasses, among others, the theories of monogenic and slice monogenic functions. In reference \cite{A2}, the authors developed the theory of poly-slice monogenic function and proved important Cauchy integral formula. This important progress leads us to think more deeply about poly-slice monogenic function.
	
	\par
	Recent works in \cite{Xu,Xu1,Xu2} have expanded the framework to include generalized partial-slice monogenic functions, defined on domains structured as $\mathbb{R}^p \times \mathbb{R}^q$ and admitting decomposition along partially symmetric variable sets. These functions generalize both classical monogenic and slice monogenic settings. Meanwhile, Ghiloni et al. \cite{Add1,Add2} carried out a deep investigation into the theory of $T$-regular functions, which emcompasses the cases of monogenic functions, slice monogenic functions and generalized partial-slice monogenic functions. In this unified theory, they established fundamental results such as a Cauchy integral formula, series expansions, and an identity principle \cite{Add2}. This establishes a foundational connection between these seemingly distinct theories. This paper can be consided as a continued work to \cite{HD}, which studied integral formulas for generalized partial slice monogenic functions. More specifically, in this paper, we introduce the generalized poly-partial-slice Dirac operators and the Cauchy kernels $\mathcal{E}_{k,\boldsymbol{y}}(\boldsymbol{x})$, defined recursively to reflect the algebraic structure and variable decomposition of the space.
	
	\par
	These kernels $\mathcal{E}_{k,\boldsymbol{y}}(\boldsymbol{x})$ provide the foundation for constructing a sequence of higher order Teodorescu transforms $T_k$. These integral operators act on $L^1$-functions and are intrinsically linked to iterated generalized Dirac-type differential operators. We derive and prove fundamental identities for the kernels and operators, including: a higher order Borel--Pompeiu formula extending classical Clifford analysis results; a generalized Cauchy integral formula for the action of $T_k$ on monogenic-type functions; and a recursive differential relation on fundamental solutions.
	
	This paper is structured as follows. Section $2$ provides fundamental concepts in Clifford algebras and the definition of generalized partial-slice monogenic functions along with some properties. In Section $3$, we define the generalized poly-partial-slice Cauchy kernel and derive its associated Borel-Pompeiu formula and Cauchy integral formula through global differential operators. Section $4$ is devoted to an investigation on the higher order Teodorescu transform corresponding to the previously mentioned Cauchy kernel. Existence and continuity properties of the higher order Teodorescu transform are also studied here.
\section{Preliminaries}
To help explain the problems discussed in this paper and to review relevant background, we first introduce some basic notations and definitions on Clifford algebras and slice functions. For more detailed discussions, see~\cite{Br1,De,Gi1,Co4}.
\subsection{Clifford algebras and monogenic functions}
Let the standard orthonormal basis $\left\{ e_1,e_2,\cdots ,e_n \right\}$ for the $n$-dimensional real Euclidean space $\mathbb{R} ^n$. The real Clifford algebra, denoted by $\mathbb{R} _n$, is constructed from these basis elements with the relations
\begin{align*}  
e_i e_j + e_j e_i = -2\delta_{ij}.  
\end{align*} 
Hence, each element in this algebra can be written as  
\begin{align*}  
\sum\nolimits_A e_A x_A,  
\end{align*}  
where $A = i_1\cdots i_r$ is a multi-index with $i_1 < \cdots < i_r$, $e_A = e_{i_1}e_{i_2}\cdots e_{i_r}$ (with $e_\emptyset = 1$), and $x_A$ is a real number.  
A vector $(x_1, x_2, \dots, x_n)\in \mathbb{R}^n$ can be identified with a $1$-vector in the Clifford algebra through  
\begin{align*}  
(x_1, x_2, \dots, x_n) \mapsto \underline{x} = x_1e_1 + x_2e_2 + \cdots + x_ne_n.  
\end{align*}
Further, we view an element $(x_1, x_2, \dots, x_n)$ in $\mathbb{R}^{n+1}$ as a paravector by writing it as
\begin{align*}
x = x_0 + \underline{x} = x_0 + \sum_{j=1}^n x_j e_j.
\end{align*}
The norm of $x$ is defined by
\begin{align*}
|x|^2 = x_0^2 + x_1^2 + \cdots + x_n^2.
\end{align*}
The real part of $x$, denoted as $x_0$, can also be written as $\text{Re}[x]$. A function $f$ defined on an open set $U \subseteq \mathbb{R}^{n+1}$ with values in $\mathbb{R}_n$ is considered as a function $f(x)$ of the paravector $x$.
\par
\begin{definition}[Monogenic functions] Let $\Omega\subset \mathbb{R}^{n+1}$ be a domain and let $f: \Omega\longrightarrow \mathbb{R}_{n}$ be a function with continuous partial derivatives. The function $f=\sum_{A}e_Af_A$ is called a left monogenic function in $\Omega$ if it satisfies the following generalized Cauchy-Riemann equation
\begin{align*}
\overline{\partial}_xf(x)=(e_0\partial_{x_0}+D_x)f(x)=\sum_{i=0}^{n}e_i\frac{\partial f(x)}{\partial x_i}=0,
\end{align*}
for all $x\in\Omega$, where $D_x=\sum_{i=1}^ne_i\frac{\partial}{\partial x_i}$ is known as the Dirac operator in $\mathbb{R}^n$. Since Clifford multiplication is not commutative in general, there is a similar definition for right monogenic functions.
\end{definition}
\subsection{ Generalized partial-slice monogenic functions}
Now, we introduce some concepts and definitions in the theory of generalized partial-slice monogenic functions, which are needed for later discussions, for more details, see \cite{Xu,Xu1}.
\par
Let $p$ and $q$ be positive integers.
Consider a vector $\boldsymbol{x}\in \mathbb{R} ^{p+q}$, which can be written as
\begin{align*}
	\boldsymbol{x}=\boldsymbol{x}_p+\underline{\boldsymbol{x}}_q\in \mathbb{R} ^p\oplus \mathbb{R} ^q,\quad \boldsymbol{x}_p=\sum_{i=1}^p{x_ie_i},\quad \underline{\boldsymbol{x}}_q=\sum_{i=p+1}^{p+q}{x_ie_i}.
\end{align*}
In the rest of the paper, we always split $\boldsymbol{x}\in \mathbb{R} ^{p+q}$ as $\mathbb{R} ^p\oplus \mathbb{R} ^q$.
\par
In a similar manner, the Dirac operator and the Euler operator in $\mathbb{R}^{p+q}$ can be decomposed as follows.
\begin{align*}
	D_{\boldsymbol{x}} &= \sum_{i=1}^{p+q} e_i \partial_{x_i} = \sum_{i=1}^p e_i \partial_{x_i} + \sum_{i=p+1}^{p+q} e_i \partial_{x_i} = D_{\boldsymbol{x}_p} + D_{\underline{\boldsymbol{x}}_q}, \\
	\mathbb{E}_{\boldsymbol{x}} &= \sum_{i=1}^{p+q} x_i \partial_{x_i} = \sum_{i=1}^p x_i \partial_{x_i} + \sum_{i=p+1}^{p+q} x_i \partial_{x_i} = \mathbb{E}_{\boldsymbol{x}_p} + \mathbb{E}_{\underline{\boldsymbol{x}}_q}.
\end{align*}
We denote
\begin{align*}
	\mathbb{S} = \left\{ \underline{\boldsymbol{x}}_q : {\underline{\boldsymbol{x}}_q}^2 = -1 \right\} = \left\{ \underline{\boldsymbol{x}}_q = \sum_{i=p+1}^{p+q} x_i e_i : \sum_{i=p+1}^{p+q} x_i^2 = 1 \right\},
\end{align*}
which represents the set of unit sphere in $\mathbb{R}^q$.
We also note that any nonzero vector $\underline{\boldsymbol{x}}_q$ can be written uniquely as $\underline{\boldsymbol{x}}_q = r\,\underline{\omega}$, 
where $r = |\underline{\boldsymbol{x}}_q|$ and $ \underline{\omega} = \frac{\underline{\boldsymbol{x}}_q}{|\underline{\boldsymbol{x}}_q|} $ with $ r>0$ and $\underline{\omega}\in\mathbb{S}$. If $\underline{\boldsymbol{x}}_q=0$, we set $r=0$ and note that $\underline{\omega}$ is not uniquely determined because, in that case, any unit vector will work.
\par
We will use the following notation
\begin{align*}
	\Omega_{\underline{\omega}} := \Omega \cap \left( \mathbb{R}^{p} \oplus \underline{\omega} \mathbb{R} \right) \subseteq \mathbb{R}^{p+1},
\end{align*}
where $\Omega$ is an open set in $\mathbb{R}^{p+q}$.
\begin{definition}
	Let $F: D \longrightarrow \mathbb{R}_{p+q} \otimes \mathbb{C}$ be a function on an open set $D \subseteq \mathbb{R}^{p+1}$. We call $F$  a \textbf{stem function} if it is invariant under the reflection of the $(p+1)$-th variable, and its components
	$F_1$ and $F_2$ with $(F = F_1 + iF_2)$ satisfy
	\begin{align*}
		F_1(\boldsymbol{x}_p, -r) = F_1(\boldsymbol{x}_p, r) \quad \text{and} \quad F_2(\boldsymbol{x}_p, -r) = -F_2(\boldsymbol{x}_p, r)
	\end{align*}
	for every point $(\boldsymbol{x}_p, r)$ in $D$.
	\par
	Given a stem function $F$, a (left) generalized partial-slice function can be indcuced as $f = \mathcal{I}(F):\Omega_D\longrightarrow\mathbb{R}_{p+q}$ by
	\begin{align*}
		f(\boldsymbol{x}) = F_1(\boldsymbol{x}') + \underline{\omega}\,F_2(\boldsymbol{x}'),
	\end{align*}
	where we write $\boldsymbol{x}' = (\boldsymbol{x}_p, r)$ and $\boldsymbol{x} = \boldsymbol{x}_p + r\,\underline{\omega}$, with $\underline{\omega} \in \mathbb{S}$.
\end{definition}
We denote the set of all induced generalized partial-slice functions on $\Omega _D$ by
\begin{align*}
	\mathcal{G} \mathcal{S} \left( \Omega _D \right) :=\left\{ f=\mathcal{I} \left( F \right) :  F\,\,is\,\,a\,\,stem\,\,function\,\,on\,\,D \right\}.
\end{align*}
The following representation formula for generalized partial-slice functions plays a significantly important role in the theory of generalized partial-slice monogenic functions.
\begin{theorem}[Representation Formula]\cite{Xu}
	Let $f\in \mathcal{G} \mathcal{S} \left( \Omega _D \right) $. Then it holds that, for every $\boldsymbol{x}=\boldsymbol{x}_p+r\underline{\omega}\in \Omega _D$ with $\underline{\omega }\in \mathbb{S} $,
	\begin{align*}
		f\left( \boldsymbol{x} \right) =&\left( \underline{\omega }-\underline{\omega }_2 \right) \left( \underline{\omega }_1-\underline{\omega }_2 \right) ^{-1}f\left( \boldsymbol{x}_p+r\underline{\omega }_1 \right) \\
		&-\left( \underline{\omega }-\underline{\omega }_1 \right) \left( \underline{\omega }_1-\underline{\omega }_2 \right) ^{-1}f\left( \boldsymbol{x}_p+r\underline{\omega }_2 \right) ,
	\end{align*}
    for all $\underline{\omega }_1\ne \underline{\omega }_2\in \mathbb{S} $. In particular, $\underline{\omega }_1=-\underline{\omega }_2=\underline{\eta }\in \mathbb{S} $,
    \begin{align}\label{Representation Formula}
    	f(\boldsymbol{x})=&\frac{1}{2}\left( 1-\underline{\omega }\underline{\eta } \right) f(\boldsymbol{x}_p+r\underline{\eta })+\frac{1}{2}\left( 1+\underline{\omega }\underline{\eta } \right) f(\boldsymbol{x}_p-r\underline{\eta })\nonumber	\\
    	=&\frac{1}{2}\left( f(\boldsymbol{x}_p+r\underline{\eta })+f(\boldsymbol{x}_p-r\underline{\eta }) \right) +\frac{1}{2}\underline{\omega }\underline{\eta }\left( f(\boldsymbol{x}_p-r\underline{\eta })-f(\boldsymbol{x}_p+r\underline{\eta }) \right).
    \end{align}
\end{theorem}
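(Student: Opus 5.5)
The plan is to compute directly from the definition $f=\mathcal{I}(F)$, using the parity conditions on the stem function $F=F_1+iF_2$. Fix $\boldsymbol{x}=\boldsymbol{x}_p+r\underline{\omega}$ and write $\boldsymbol{x}'=(\boldsymbol{x}_p,r)$. For any $\underline{\omega}_j\in\mathbb{S}$ the definition gives
\begin{align*}
f(\boldsymbol{x}_p+r\underline{\omega}_j)=F_1(\boldsymbol{x}')+\underline{\omega}_jF_2(\boldsymbol{x}'), \quad j=1,2.
\end{align*}
This holds independently of the representation chosen, because the even/odd conditions $F_1(\boldsymbol{x}_p,-r)=F_1(\boldsymbol{x}_p,r)$ and $F_2(\boldsymbol{x}_p,-r)=-F_2(\boldsymbol{x}_p,r)$ ensure that replacing $(r,\underline{\omega})$ by $(-r,-\underline{\omega})$ leaves the value unchanged. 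When $r=0$, oddness forces $F_2(\boldsymbol{x}_p,0)=0$, so $f(\boldsymbol{x})=F_1$ is well defined; this is the first small point to record.

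Next I will solve for $F_1$ and $F_2$. Subtracting the two identities gives $(\underline{\omega}_1-\underline{\omega}_2)F_2=f_1-f_2$, where $f_j:=f(\boldsymbol{x}_p+r\underline{\omega}_j)$. The vector $\underline{\omega}_1-\underline{\omega}_2$ is nonzero, hence invertible in $\mathbb{R}_{p+q}$ with inverse $-(\underline{\omega}_1-\underline{\omega}_2)/|\underline{\omega}_1-\underline{\omega}_2|^2$. Therefore $F_2=(\underline{\omega}_1-\underline{\omega}_2)^{-1}(f_1-f_2)$, and $F_1=f_1-\underline{\omega}_1F_2$. Substituting these into $f(\boldsymbol{x})=F_1+\underline{\omega}F_2$ yields
\begin{align*}
f(\boldsymbol{x})=f_1+(\underline{\omega}-\underline{\omega}_1)(\underline{\omega}_1-\underline{\omega}_2)^{-1}(f_1-f_2).
\end{align*}
To match the stated form, I write $f_1=(\underline{\omega}_1-\underline{\omega}_2)(\underline{\omega}_1-\underline{\omega}_2)^{-1}f_1$ and combine the coefficients of $f_1$. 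The coefficient becomes $(\underline{\omega}_1-\underline{\omega}_2+\underline{\omega}-\underline{\omega}_1)(\underline{\omega}_1-\underline{\omega}_2)^{-1}=(\underline{\omega}-\underline{\omega}_2)(\underline{\omega}_1-\underline{\omega}_2)^{-1}$, and the coefficient of $f_2$ is $-(\underline{\omega}-\underline{\omega}_1)(\underline{\omega}_1-\underline{\omega}_2)^{-1}$, as claimed. The main care needed here is to respect noncommutativity: every coefficient must multiply $f_j$ on the left, which is consistent because $F_1$ and $F_2$ sit to the right of $\underline{\omega}$ in the definition.

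For the special case, set $\underline{\omega}_1=\underline{\eta}$ and $\underline{\omega}_2=-\underline{\eta}$. Then $(2\underline{\eta})^{-1}=-\underline{\eta}/2$ since $\underline{\eta}^2=-1$. The coefficient of $f(\boldsymbol{x}_p+r\underline{\eta})$ is $(\underline{\omega}+\underline{\eta})(-\underline{\eta}/2)=\tfrac12(1-\underline{\omega}\underline{\eta})$, and the coefficient of $f(\boldsymbol{x}_p-r\underline{\eta})$ is $-(\underline{\omega}-\underline{\eta})(-\underline{\eta}/2)=\tfrac12(1+\underline{\omega}\underline{\eta})$. Expanding these products gives the second line of (\ref{Representation Formula}). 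No step is a real obstacle; the only delicate points are the well-definedness at $r=0$ and keeping the order of factors correct.
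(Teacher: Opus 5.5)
Your proof is correct, and it is essentially the standard argument. The paper gives no proof of its own and defers to Xu and Sabadini, where the formula is obtained by the same elimination: write $f(\boldsymbol{x}_p+r\underline{\omega}_j)=F_1(\boldsymbol{x}')+\underline{\omega}_jF_2(\boldsymbol{x}')$, solve for $F_2$ and $F_1$ using the invertibility of $\underline{\omega}_1-\underline{\omega}_2$, and substitute into $F_1+\underline{\omega}F_2$. One point worth stating explicitly is that $\boldsymbol{x}_p+r\underline{\omega}_j\in\Omega_D$ by the $p$-symmetry of $\Omega_D$, so both evaluations of $f$ are defined.
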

\begin{definition}
	Let $D$ be a set in $\mathbb{R}^{p+1}$ that is invariant under the reflection $(\bx_p,r)\mapsto (\bx_p,-r)$. The \textbf{$p$-symmetric completion} of $D$, written as $\Omega_D \subset \mathbb{R}^{p+q}$, is defined by  
	\begin{align*}
		\Omega_D = \bigcup_{\underline{\omega} \in \mathbb{S}} \left\{ \bx_p + r \underline{\omega} : \exists\, \bx_p \in \mathbb{R}^p,\, r \geq 0,\, (\bx_p, r) \in D \right\}.
	\end{align*}
\end{definition}

It is useful to note that $\Omega \subset \mathbb{R}^{p+q}$ is $p$-symmetric if and only if there is a corresponding set $D \subset \mathbb{R}^{p+1}$ such that $\Omega = \Omega_D$. In this paper, we always denote a $p$-symmetric set in $\mathbb{R}^{p+q}$ by $\Omega_D$.
\par
The set of all generalized partial-slice functions on $\Omega_D$ is denoted by
$$\mathcal{GS}(\Omega_D) := \Big\{ f = \mathcal{I}(F) : F \text{ is a stem function on } D \Big\}.$$
\begin{definition}
	Let $f \in \mathcal{G}\mathcal{S}(\Omega_D)$. We say that $f$ is \textbf{generalized partial-slice monogenic of type $\boldsymbol{(p,q)}$} if its corresponding stem function $F = F_1 + iF_2$ satisfies the generalized Cauchy-Riemann equations
	\begin{align*}
		\begin{cases}  
			D_{\boldsymbol{x}_p} F_1 - \partial_r F_2 = 0, \\  
			-D_{\boldsymbol{x}_p} F_2 + \partial_r F_1 = 0.
		\end{cases}
	\end{align*}
	Here, $F_1$ and $F_2$ are the real components of $F$. This condition ensures that $f$ follows an extended version of the classical Cauchy-Riemann equations.
\end{definition}
\begin{remark}
The definition of generalized partial-slice monogenic functions is different from the one given in \cite{Xu,Xu1}. Here, we deal with functions defined on the vector space $\mathbb{R}^{p+q}$ instead of the paravector space $\mathbb{R}^{p+q+1}$.
\end{remark}
For any $ C^1 $ function $f: \Omega \longrightarrow \mathbb{R}_{p+q}$,
we define a global differential operator as
\begin{align*}
	\bar{\vartheta} f(\boldsymbol{x}) = D_{\boldsymbol{x}_p} f(\boldsymbol{x}) + \frac{\underline{\boldsymbol{x}}_q}{|\underline{\boldsymbol{x}}_q|^2} \, \mathbb{E}_{\underline{\boldsymbol{x}}_q} f(\boldsymbol{x}).
\end{align*}
Because the term $|\underline{\boldsymbol{x}}_q|^2$ in the denominator can cause singularities, we need to treat this operator with extra care. Hence,  from now on in this paper, we will write $\mathbb{R} _{*}^{p+q}:=\mathbb{R} ^{p+q}\backslash \mathbb{R} ^p$.

\section{Generalized poly-partial-slice integral formulas}

To better study generalized partial-slice monogenic functions in higher order cases, we now introduce the generalized poly-partial-slice Cauchy kernel below.

\begin{definition}[Generalized poly-partial-slice Cauchy kernel]\label{P-Cauchy kernel}
	Let $k\in \mathbb{N}$. Given $\boldsymbol{y}\in \mathbb{R}^{p+q}_{*}$, we define the generalized poly-partial-slice Cauchy kernel $\mathcal{E} _k$ by
	\begin{align*}
		\mathcal{E} _{k,\boldsymbol{y}}\left( \boldsymbol{x} \right) :=\begin{cases}
			\alpha C_{k,p}\cdot \frac{\boldsymbol{y}-\boldsymbol{x}_{\underline{\omega }}}{\sigma _p|\boldsymbol{y}-\boldsymbol{x}_{\underline{\omega }}|^{p-k+1}}+\beta C_{k,p}\cdot \frac{\boldsymbol{y}-\boldsymbol{x}_{-\underline{\omega }}}{\sigma _p|\boldsymbol{y}-\boldsymbol{x}_{-\underline{\omega }}|^{p-k+1}},&k\text{ is odd},\\
			\alpha C_{k,p}\cdot \frac{1}{\sigma _p|\boldsymbol{y}-\boldsymbol{x}_{\underline{\omega }}|^{p-k}}+\beta C_{k,p}\cdot \frac{1}{\sigma _p|\boldsymbol{y}-\boldsymbol{x}_{-\underline{\omega }}|^{p-k}},&k\text{ is even},
		\end{cases}
	\end{align*}
	where $\underline{\omega} = \frac{\underline{\boldsymbol{x}}_q}{|\underline{\boldsymbol{x}}_q|} \in \mathbb{S}$ is the unit vector in the direction of $\underline{\boldsymbol{x}}_q$, and $\sigma_p$ denotes the surface area of the unit sphere in $\mathbb{R}^p$.
\end{definition}
Notice that $\frac{\boldsymbol{y}-\boldsymbol{x}_{\underline{\omega }}}{\sigma _p|\boldsymbol{y}-\boldsymbol{x}_{\underline{\omega }}|^{p-k+1}}$ is actually the poly-monogenic Cauchy kernel, and the generalized poly-partial-slice Cauchy kernel is given by the representation formula (\ref{Representation Formula}) in terms of the poly-monogenic Cauchy kernel.
\par
Next, we establish a recursive identity for the poly-partial-slice Cauchy kernel, which is crucial to construct our integral formulas in the later sections.
\begin{theorem}\label{theta K=K-1}
	Let $\Omega _D\subset \mathbb{R} _{*}^{p+q}$ and $ k \in \mathbb{N} $ with $ k > 1 $. Then for any $ \boldsymbol{x}, \boldsymbol{y} \in \mathbb{R}^{p+q}$ with $\boldsymbol{y}\notin[\bx]$, we have
	\begin{align*}
		\overline{\vartheta }\mathcal{E} _{k,\boldsymbol{y}}\left( \boldsymbol{x} \right) =\mathcal{E} _{k-1,\boldsymbol{y}}\left( \boldsymbol{x} \right).
	\end{align*}
\end{theorem}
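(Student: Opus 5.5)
The plan is to reduce the statement to a computation for poly-monogenic kernels on a single $(p+1)$-dimensional slice. Write $\boldsymbol{x}=\boldsymbol{x}_p+r\underline{\omega}$ and $\boldsymbol{y}=\boldsymbol{y}_p+s\underline{\eta}$. I read Definition \ref{P-Cauchy kernel} through the representation formula (\ref{Representation Formula}): $\alpha=\frac12(1-\underline{\omega}\underline{\eta})$, $\beta=\frac12(1+\underline{\omega}\underline{\eta})$, and $\boldsymbol{x}_{\pm\underline{\omega}}$ denotes the point $\boldsymbol{x}_p\pm r\underline{\eta}$ on the slice of $\boldsymbol{y}$. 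Let $K_k(\boldsymbol{z})$ be the $k$-th poly-monogenic kernel in $\mathbb{R}^p\oplus\underline{\eta}\mathbb{R}$, so that
\begin{align*}
\mathcal{E}_{k,\boldsymbol{y}}(\boldsymbol{x})=\alpha\, K_k(\boldsymbol{y}-\boldsymbol{x}_p-r\underline{\eta})+\beta\, K_k(\boldsymbol{y}-\boldsymbol{x}_p+r\underline{\eta}).
\end{align*}
The hypothesis $\boldsymbol{y}\notin[\boldsymbol{x}]$ guarantees that neither argument vanishes, so everything is smooth there.

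\textbf{Step 1: $\overline{\vartheta}$ on functions of this shape.} Since $\mathbb{E}_{\underline{\boldsymbol{x}}_q}=r\partial_r$ with $\underline{\omega}$ held fixed, we get $\frac{\underline{\boldsymbol{x}}_q}{|\underline{\boldsymbol{x}}_q|^2}\mathbb{E}_{\underline{\boldsymbol{x}}_q}=\underline{\omega}\partial_r$. Hence
\begin{align*}
\overline{\vartheta}=D_{\boldsymbol{x}_p}+\underline{\omega}\,\partial_r,
\end{align*}
and the coefficients $\alpha,\beta$ are annihilated by both pieces: they depend only on $\underline{\omega}$, which $\partial_r$ and $D_{\boldsymbol{x}_p}$ do not see. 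Next I move the operators past $\alpha,\beta$ using Clifford relations.
\begin{itemize}
\item For $i\le p$, $e_i$ anticommutes with both $\underline{\omega}$ and $\underline{\eta}$, so $e_i\underline{\omega}\underline{\eta}=\underline{\omega}\underline{\eta}e_i$. Thus $D_{\boldsymbol{x}_p}\alpha=\alpha D_{\boldsymbol{x}_p}$, and likewise for $\beta$.
\item Using $\underline{\omega}^2=\underline{\eta}^2=-1$, we have $\underline{\omega}\alpha=\frac12(\underline{\omega}+\underline{\eta})=\alpha\underline{\eta}$ and $\underline{\omega}\beta=-\beta\underline{\eta}$.
\end{itemize}
Therefore
\begin{align*}
\overline{\vartheta}\mathcal{E}_{k,\boldsymbol{y}}(\boldsymbol{x})=\alpha\,(D_{\boldsymbol{x}_p}+\underline{\eta}\partial_r)\big[K_k(\boldsymbol{y}-\boldsymbol{x}_p-r\underline{\eta})\big]+\beta\,(D_{\boldsymbol{x}_p}-\underline{\eta}\partial_r)\big[K_k(\boldsymbol{y}-\boldsymbol{x}_p+r\underline{\eta})\big].
\end{align*}
Set $\boldsymbol{u}=\boldsymbol{x}_p\pm r\underline{\eta}$. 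By the chain rule, both brackets equal the slice Dirac operator $D_{\boldsymbol{u}}=D_{\boldsymbol{u}_p}+\underline{\eta}\partial_{u_{p+1}}$ applied to $K_k(\boldsymbol{y}-\boldsymbol{u})$, evaluated at $\boldsymbol{u}=\boldsymbol{x}_{\pm\underline{\omega}}$.

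\textbf{Step 2: Euclidean recursion.} It remains to check $D_{\boldsymbol{u}}K_k(\boldsymbol{y}-\boldsymbol{u})=K_{k-1}(\boldsymbol{y}-\boldsymbol{u})$ in the slice, where $\boldsymbol{w}=\boldsymbol{y}-\boldsymbol{u}$ and $D_{\boldsymbol{u}}=-D_{\boldsymbol{w}}$. For this I use two standard identities.
\begin{itemize}
\item $D_{\boldsymbol{w}}|\boldsymbol{w}|^{-m}=-m\,\boldsymbol{w}\,|\boldsymbol{w}|^{-m-2}$, which handles even $k$, giving odd $k-1$.
\item $D_{\boldsymbol{w}}\big(\boldsymbol{w}|\boldsymbol{w}|^{-m}\big)=(m-d)\,|\boldsymbol{w}|^{-m}$, where $d$ is the dimension of the slice. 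This handles odd $k$, giving even $k-1$.
\end{itemize}
Each identity produces the kernel of order $k-1$ up to a scalar. The claim then reduces to the recursion these scalars impose on the constants $C_{k,p}$, together with the correct sign from $D_{\boldsymbol{u}}=-D_{\boldsymbol{w}}$.

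\textbf{Main obstacle.} The substance lies in Step 2's bookkeeping: the exponents $p-k+1$ and $p-k$ in Definition \ref{P-Cauchy kernel} must match the slice dimension $p+1$ in $D_{\boldsymbol{w}}(\boldsymbol{w}|\boldsymbol{w}|^{-m})$, and $C_{k,p}$ must be exactly the normalizations making the recursion hold, with no leftover factor. I would first write out the two cases $k$ odd and $k$ even separately and confirm that the resulting scalar equals $C_{k-1,p}/C_{k,p}$. Only if it does not would I revisit the intended dimension convention, and I would also check the degenerate exponents where $p-k\le 0$ separately.
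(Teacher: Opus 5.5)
Your proposal is correct and is essentially the paper's argument. Both differentiate each poly-monogenic piece term by term, picking up the factor $(p+1)-(p-k+1)=k$ for odd $k$ and $p-k$ for even $k$; the constants, which the paper never specifies, absorb these via $C_{k-1,p}=k\,C_{k,p}$, resp.\ $C_{k-1,p}=(p-k)\,C_{k,p}$. Your Step~1 intertwining ($e_i\alpha=\alpha e_i$, $\underline{\omega}\alpha=\alpha\underline{\eta}$, $\underline{\omega}\beta=-\beta\underline{\eta}$) adds a justification the paper skips, since the paper treats $\alpha,\beta$ as constants and tacitly places $\boldsymbol{y}$ on the $\underline{\omega}$-slice.

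No change of dimension convention is needed, because the exponents chain correctly for any slice dimension. The $p$ versus $p+1$ mismatch only breaks $\overline{\vartheta}\mathcal{E}_{1,\boldsymbol{y}}=0$, which is not at stake here.

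Your planned degenerate-case check does find a real failure. For even $k=p$ one gets $\mathcal{E}_{p,\boldsymbol{y}}=(\alpha+\beta)C_{p,p}/\sigma_p=C_{p,p}/\sigma_p$, a constant, so $\overline{\vartheta}\mathcal{E}_{p,\boldsymbol{y}}=0\neq\mathcal{E}_{p-1,\boldsymbol{y}}$. This is a defect of the statement itself (logarithmic kernels are needed there), and the paper's proof also overlooks it.
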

\begin{proof}
	Suppose $k = 2n + 1$ for some $n \in \mathbb{N}$. By Definition~\ref{P-Cauchy kernel}, the kernel $\mathcal{E}_{k,\boldsymbol{y}}(\boldsymbol{x})$ for odd $k$ is given by
	\begin{align*}
		\mathcal{E}_{k,\boldsymbol{y}}(\boldsymbol{x}) = \alpha C_{k,p} \cdot \frac{\boldsymbol{y} - \boldsymbol{x}_{\underline{\omega}}}{\sigma_p |\boldsymbol{y} - \boldsymbol{x}_{\underline{\omega}}|^{p - k + 1}} + \beta C_{k,p} \cdot \frac{\boldsymbol{y} - \boldsymbol{x}_{-\underline{\omega}}}{\sigma_p |\boldsymbol{y} - \boldsymbol{x}_{-\underline{\omega}}|^{p - k + 1}},
	\end{align*}
	where $\boldsymbol{x}_{\pm \underline{\omega}} := \boldsymbol{x}_p \pm \underline{\omega} r$. Applying the operator $\overline{\vartheta}$ to $\mathcal{E}_{k,\boldsymbol{y}}(\boldsymbol{x})$, we use the fact that both $\boldsymbol{x}_p$ and $r$ appear linearly in $\boldsymbol{x}_{\pm \underline{\omega}}$, and that the differential operator acts as
	\begin{align*}
		\overline{\vartheta} = D_{\boldsymbol{x}_p} + \frac{\underline{\boldsymbol{x}}_q}{|\underline{\boldsymbol{x}}_q|^2} \, \mathbb{E}_{\underline{\boldsymbol{x}}_q}.
	\end{align*}
	Differentiating the first term
	\begin{align*}
		&\overline{\vartheta }\left( \frac{\boldsymbol{y}-\boldsymbol{x}_{\underline{\omega }}}{|\boldsymbol{y}-\boldsymbol{x}_{\underline{\omega }}|^{p-k+1}} \right) \\
		=& \frac{p\left| \boldsymbol{y}-\boldsymbol{x}\underline{_{\omega}} \right|^{p-k+1}+\left( p-k+1 \right) \left( \boldsymbol{y}-\boldsymbol{x}\underline{_{\omega}} \right) \left| \boldsymbol{y}-\boldsymbol{x}\underline{_{\omega}} \right|^{p-k-1}\left( y_p-x_p \right)}{|\boldsymbol{y}-\boldsymbol{x}_{\underline{\omega }}|^{2\left( p-k+1 \right)}}
		\\
		+&\frac{\left| \boldsymbol{y}-\boldsymbol{x}\underline{_{\omega}} \right|^{p-k+1}+\left( p-k+1 \right) \left( \boldsymbol{y}-\boldsymbol{x}\underline{_{\omega}} \right) \left| \boldsymbol{y}-\boldsymbol{x}\underline{_{\omega}} \right|^{p-k-1}\cdot \frac{\underline{\boldsymbol{x}}_q}{\left| \underline{\boldsymbol{x}}_q \right|}\cdot \left( |\underline{\boldsymbol{y}}_q|-|\underline{\boldsymbol{x}}_q| \right)}{|\boldsymbol{y}-\boldsymbol{x}_{\underline{\omega }}|^{2\left( p-k+1 \right)}} \\
		=&\frac{\left( p+1 \right) \left| \boldsymbol{y}-\boldsymbol{x}\underline{_{\omega}} \right|^{p-k+1}+\left( p-k+1 \right) \left( \boldsymbol{y}-\boldsymbol{x}\underline{_{\omega}} \right) \left( \boldsymbol{y}-\boldsymbol{x}\underline{_{\omega}} \right) \left| \boldsymbol{y}-\boldsymbol{x}\underline{_{\omega}} \right|^{p-k-1}}{|\boldsymbol{y}-\boldsymbol{x}_{\underline{\omega }}|^{2\left( p-k+1 \right)}}\\
		=& k\cdot \frac{1}{|\boldsymbol{y}-\boldsymbol{x}_{\underline{\omega }}|^{p-k+1}}
	\end{align*}
	Therefore,
	\begin{align*}
		\overline{\vartheta} \left( \frac{\boldsymbol{y} - \boldsymbol{x}_{\underline{\omega}}}{|\boldsymbol{y} - \boldsymbol{x}_{\underline{\omega}}|^{p - k + 1}} \right)
		= k\cdot \frac{1}{|\boldsymbol{y}-\boldsymbol{x}_{\underline{\omega }}|^{p-k+1}}.
	\end{align*}
	A similar calculation holds for the second term involving $\boldsymbol{x}_{-\underline{\omega}}$, which leads to
	\begin{align*}
		\overline{\vartheta} \mathcal{E}_{k,\boldsymbol{y}}(\boldsymbol{x}) = \mathcal{E}_{k-1, \boldsymbol{y}}(\boldsymbol{x}),
	\end{align*}
	as defined in Definition~\ref{P-Cauchy kernel} for even \(k - 1\).
	\par
	A similar argument applies when $k = 2n$ is even. In this case, $\mathcal{E}_k$ involves reflection-symmetric terms of the form $\boldsymbol{y} - \boldsymbol{x}_{\pm \underline{\omega}}$. Applying $\overline{\vartheta}$ to each term again reduces the homogeneity of the denominator by two, and the result matches the structure of $\mathcal{E}_{k-1}$.	
	By the same reasoning, one can verify that
	\begin{align*}
		\overline{\vartheta} \mathcal{E}_{k-1} = \mathcal{E}_{k-2},
	\end{align*}
	regardless of the parity of $k-1$. Thus, the identity holds for all $k > 1$.
\end{proof}
With Theorem \ref{theta K=K-1} above, it is obvious that we can reach the following conclusion.
\begin{theorem}\label{kernel=0}
	For  each $k\in \mathbb{N} $, and $ \boldsymbol{x}, \boldsymbol{y} \in \mathbb{R}^{p+q}$ with $\boldsymbol{y}\notin[\bx]$, the generalized poly-partial-slice Cauchy kernel $ \mathcal{E}_{k,\boldsymbol{y}}(\boldsymbol{x}) $ satisfies 
	\begin{align*}
		\overline{\vartheta }^k\mathcal{E}_{k,\boldsymbol{y}}(\boldsymbol{x}) =0.
	\end{align*}	
\end{theorem}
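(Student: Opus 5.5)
The plan is to iterate Theorem \ref{theta K=K-1} and then treat the base case $k=1$ by a direct computation. For $k\ge 2$, apply Theorem \ref{theta K=K-1} successively. Since $\boldsymbol{y}\notin[\bx]$, the hypothesis of that theorem holds at every stage, and the kernels are smooth away from the symmetric orbit of $\boldsymbol{y}$. This gives
\begin{align*}
\overline{\vartheta}^{k-1}\mathcal{E}_{k,\boldsymbol{y}}(\bx)=\overline{\vartheta}^{k-2}\mathcal{E}_{k-1,\boldsymbol{y}}(\bx)=\cdots=\mathcal{E}_{1,\boldsymbol{y}}(\bx).
\end{align*}
Hence $\overline{\vartheta}^{k}\mathcal{E}_{k,\boldsymbol{y}}=\overline{\vartheta}\,\mathcal{E}_{1,\boldsymbol{y}}$, and everything reduces to showing that the first-order kernel is annihilated by $\overline{\vartheta}$.

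For $k=1$ the kernel is
\begin{align*}
\mathcal{E}_{1,\boldsymbol{y}}(\bx)=\alpha C_{1,p}\frac{\boldsymbol{y}-\bx_{\underline{\omega}}}{\sigma_p|\boldsymbol{y}-\bx_{\underline{\omega}}|^{p}}+\beta C_{1,p}\frac{\boldsymbol{y}-\bx_{-\underline{\omega}}}{\sigma_p|\boldsymbol{y}-\bx_{-\underline{\omega}}|^{p}}.
\end{align*}
I would redo the computation in the proof of Theorem \ref{theta K=K-1} with $k=1$. There the general formula gives
\begin{align*}
\overline{\vartheta}\left(\frac{\boldsymbol{y}-\bx_{\underline{\omega}}}{|\boldsymbol{y}-\bx_{\underline{\omega}}|^{p-k+1}}\right)=k\cdot(\text{lower kernel}),
\end{align*}
which suggests the value should vanish at $k=1$. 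More concretely, on each slice $\bx_p+r\underline{\omega}$ the operator $\overline{\vartheta}$ acts as $D_{\bx_p}+\underline{\omega}\partial_r$, which is the Dirac operator in the $(p+1)$-dimensional space $\mathbb{R}^p\oplus\underline{\omega}\mathbb{R}$. The function $(\boldsymbol{y}-\bx_{\underline{\omega}})/|\boldsymbol{y}-\bx_{\underline{\omega}}|^{p}$ is, up to constants, the Cauchy kernel of that space. Its Dirac derivative is a constant multiple of $(p+1)-p-1=0$ for $\bx_{\underline{\omega}}\neq\boldsymbol{y}$, since the degree of homogeneity $p$ matches the dimension $p+1$. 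The term with $-\underline{\omega}$ is handled the same way, because $\bx_{-\underline{\omega}}$ is again linear in $(\bx_p,r)$. Linearity then gives $\overline{\vartheta}\mathcal{E}_{1,\boldsymbol{y}}=0$.

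The main obstacle is the bookkeeping in this base step. Two things need checking. First, the coefficients $\alpha,\beta$, which depend on $\underline{\omega}$ (and so on $\bx$), must be compatible with $\overline{\vartheta}$. Here I would use that $\mathbb{E}_{\underline{\boldsymbol{x}}_q}$ only differentiates radially, so functions of $\underline{\omega}$ alone are annihilated by it. Second, the left multiplication by $\underline{\boldsymbol{x}}_q/|\underline{\boldsymbol{x}}_q|^2$ must commute correctly with these coefficients; I would either track this carefully or invoke the representation formula (\ref{Representation Formula}), as in the paper's remark after Definition \ref{P-Cauchy kernel}, to reduce to the slice computation. Once this is settled, the iteration above completes the proof for all $k\in\mathbb{N}$.
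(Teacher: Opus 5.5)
Your reduction is the same as the paper's. Iterating Theorem~\ref{theta K=K-1} gives $\overline{\vartheta}^{k}\mathcal{E}_{k,\boldsymbol{y}}=\overline{\vartheta}\,\mathcal{E}_{1,\boldsymbol{y}}$, and the paper then writes $\overline{\vartheta}\mathcal{E}_{1,\boldsymbol{y}}=0$ as the last link of its chain without proof, tacitly treating $\mathcal{E}_1$ as the Xu--Sabadini generalized partial-slice Cauchy kernel. The gap is in the base-case computation you supply for that link, which is wrong. Work on the slice through $\boldsymbol{y}$, where $u=\boldsymbol{y}-\bx_{\underline{\omega}}$ is a vector of the $(p+1)$-dimensional space $\mathbb{R}^p\oplus\underline{\omega}\mathbb{R}$ and $\overline{\vartheta}=D_{\bx_p}+\underline{\omega}\partial_r$. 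There $\overline{\vartheta}u=p+1$ and $\overline{\vartheta}|u|^{-\lambda}=\lambda|u|^{-\lambda-2}u$, hence
\begin{align*}
\overline{\vartheta}\left(\frac{u}{|u|^{\lambda}}\right)=\frac{\lambda\,u^{2}}{|u|^{\lambda+2}}+\frac{p+1}{|u|^{\lambda}}=\frac{p+1-\lambda}{|u|^{\lambda}}.
\end{align*}
For $k=1$, Definition~\ref{P-Cauchy kernel} has $\lambda=p-k+1=p$. So the factor is $(p+1)-p=1$, not $(p+1)-p-1=0$; your arithmetic silently uses $\lambda=p+1$. The formula you quote from the proof of Theorem~\ref{theta K=K-1} says the same thing: $\overline{\vartheta}\bigl(u/|u|^{p-k+1}\bigr)=k\,|u|^{-(p-k+1)}$ vanishes at $k=0$, not at $k=1$, where it equals $|u|^{-p}\neq 0$. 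Put differently, $u/|u|^{p}$ is homogeneous of degree $1-p$ and is not the Cauchy kernel of $\mathbb{R}^{p+1}$; that kernel is $u/|u|^{p+1}$.

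So the obstacle is not the $\alpha,\beta$ bookkeeping you flag. That part is harmless if they are the representation-formula coefficients $\tfrac12(1\mp\underline{\omega}\underline{\eta})$: then $\underline{\omega}\,\tfrac12(1\mp\underline{\omega}\underline{\eta})=\tfrac12(1\mp\underline{\omega}\underline{\eta})(\pm\underline{\eta})$, and each $e_i$ with $i\le p$ commutes with $\underline{\omega}\underline{\eta}$, so $\overline{\vartheta}$ passes through them onto the slice Dirac operator. The real obstacle is an off-by-one in the exponents. With the normalization of Definition~\ref{P-Cauchy kernel}, the term $u/|u|^{p}$ is not annihilated by $\overline{\vartheta}$, so no careful version of your computation can return zero. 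What is missing is that $\mathcal{E}_1$ must be built from the genuine slice Cauchy kernel $u/|u|^{p+1}$. Consistently, the whole family must then be the fundamental solutions of the iterated Dirac operator in $\mathbb{R}^{p+1}$: $u/|u|^{p-k+2}$ for odd $k$ and $|u|^{-(p+1-k)}$ for even $k$. With these, the odd-step constant becomes $(p+1)-(p-k+2)=k-1$, which does vanish at $k=1$, and the base case is exactly your slice computation. As written, your argument, like the paper's one-line proof, rests on the unproved equality $\overline{\vartheta}\mathcal{E}_{1,\boldsymbol{y}}=0$, and the verification you propose for it, done correctly, produces a nonzero value.
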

\begin{proof}
	From Theorem \ref{theta K=K-1}, we can easily obtain
	\begin{align*}
		\overline{\vartheta }^k\mathcal{E}_{k,\boldsymbol{y}}(\boldsymbol{x})  =\overline{\vartheta }^{k-1}\mathcal{E}_{k-1,\boldsymbol{y}}(\boldsymbol{x})  =\cdots \cdots =\overline{\vartheta }\mathcal{E}_{1,\boldsymbol{y}}(\boldsymbol{x}) =0.
	\end{align*}
	The proof is complete.
\end{proof}
It is easy to observe that the operator $\overline{\vartheta}$ becomes the Dirac operator in $\mathbb{R}^{p+1}$ when it is restricted from $\mathbb{R}_{*}^{p+q}$ to $\mathbb{R}^{p+1}$. Hence, we immediately obtain the following Stokes' Theorem from the classical result of the Dirac operator given in \cite[Theorem A.2.22]{Gu3}.
\begin{theorem}[Stokes Theorem]\label{Stokes Theorem Slice}\cite[Theorem A.2.22]{Gu3}
	If $U$ is a domain in $\mathbb{R}_{*}^{p+q}$ such that $U_{\underline{\eta}} \subset \Omega_{\underline{\eta}}$ is a bounded domain in $\mathbb{R}^{p+1}$ with smooth boundary $\partial U_{\underline{\eta}} \subset \Omega_{\underline{\eta}}$ for some $\underline{\eta} \in \mathbb{S}$, and let $f, g \in C^1(\overline{U_{\underline{\eta}}})$. Then we have
	\begin{align}\label{S-T}
		\int_{\partial U_{\underline{\eta}}} f \, dS_{\underline{\eta}} \, g = \int_{U_{\underline{\eta}}} \left[ (f \overline{\vartheta}) g + f (\overline{\vartheta} g) \right] \, dV_{\underline{\eta}},
	\end{align}
	where $dS_{\underline{\eta}}$ is the oriented surface element on $\partial U_{\underline{\eta}}$, and $dV_{\underline{\eta}}$ is the volume element in $U_{\underline{\eta}}$.
\end{theorem}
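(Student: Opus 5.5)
The plan is to reduce the statement to the classical Stokes theorem for the Cauchy--Riemann operator on $\mathbb{R}^{p+1}$, by identifying the slice $\mathbb{R}^p\oplus\underline{\eta}\mathbb{R}$ with $\mathbb{R}^{p+1}$. I would show that, restricted to this slice, $\overline{\vartheta}$ acts on a $C^1$ function exactly as the Dirac-type operator $D_{\boldsymbol{x}_p}+\underline{\eta}\,\partial_r$ does. Then \eqref{S-T} becomes a direct instance of \cite[Theorem A.2.22]{Gu3}.

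First I restrict to the slice. For $\boldsymbol{x}=\boldsymbol{x}_p+r\underline{\eta}$ with $r\neq 0$ we have $\underline{\boldsymbol{x}}_q=r\underline{\eta}$ and $|\underline{\boldsymbol{x}}_q|^2=r^2$. The Euler operator $\mathbb{E}_{\underline{\boldsymbol{x}}_q}$ is the radial derivative $r\partial_r$ in $\mathbb{R}^q$. Hence
\begin{align*}
\frac{\underline{\boldsymbol{x}}_q}{|\underline{\boldsymbol{x}}_q|^2}\mathbb{E}_{\underline{\boldsymbol{x}}_q}f=\frac{r\underline{\eta}}{r^2}\,r\partial_r f=\underline{\eta}\,\partial_r f .
\end{align*}
This step uses only derivatives along the line $\underline{\eta}\mathbb{R}$, so it depends only on $f|_{\Omega_{\underline{\eta}}}$. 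Consequently $\overline{\vartheta}g=(D_{\boldsymbol{x}_p}+\underline{\eta}\partial_r)g$ on $U_{\underline{\eta}}$. By the same computation with the units written on the right, $f\overline{\vartheta}=f\overline{\partial}$, that is $\sum_i(\partial_{x_i}f)e_i+(\partial_r f)\underline{\eta}$. The hypothesis $U\subset\mathbb{R}^{p+q}_*$ keeps us away from $r=0$, so no singularity enters.

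Next I note that $e_1,\dots,e_p,\underline{\eta}$ satisfy the Clifford relations: $\underline{\eta}^2=-1$, and $\underline{\eta}e_i=-e_i\underline{\eta}$ for $i\le p$ because $\underline{\eta}$ lies in the span of $e_{p+1},\dots,e_{p+q}$. So they generate a copy of $\mathbb{R}_{p+1}$ inside $\mathbb{R}_{p+q}$, and $D_{\boldsymbol{x}_p}+\underline{\eta}\partial_r$ is the Dirac operator of $\mathbb{R}^{p+1}$ in the orthonormal frame $(e_1,\dots,e_p,\underline{\eta})$. Functions with values in $\mathbb{R}_{p+q}$ are still fine, since the classical proof is componentwise on real coordinates and uses only the Clifford relations of the frame. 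We then apply the classical Stokes theorem on the bounded smooth domain $U_{\underline{\eta}}$, with $dS_{\underline{\eta}}=\sum_i e_i\,\widehat{dx_i}\pm\underline{\eta}\,\widehat{dr}$, equivalently $n\,d\sigma$ with $n$ the outward normal written as a vector. Concretely, the divergence theorem applied to the real components of $f\,n\,g$ gives
\begin{align*}
\int_{\partial U_{\underline{\eta}}} f\,n\,g\,d\sigma=\int_{U_{\underline{\eta}}}\Big(\sum_i\partial_i f\,e_i\,g+f\,e_i\,\partial_i g\Big)dV ,
\end{align*}
which is exactly \eqref{S-T}.

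The main, though mild, obstacle is bookkeeping. One must check that the orientation of $dS_{\underline{\eta}}$ matches the frame $(e_1,\dots,e_p,\underline{\eta})$. One must also check that the right action $f\overline{\vartheta}$ is read on the slice as $f\overline{\partial}$ rather than via the original formula, where the $\underline{\boldsymbol{x}}_q$ factor must move to the right. Both are handled by the identification in the first step.
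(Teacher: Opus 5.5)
Your proposal is correct and takes the same route as the paper. The paper only remarks that, on the slice $\mathbb{R}^p\oplus\underline{\eta}\mathbb{R}$ away from $r=0$, the operator $\overline{\vartheta}$ becomes the Dirac operator of $\mathbb{R}^{p+1}$, and then cites \cite[Theorem A.2.22]{Gu3}. You supply the details the paper leaves implicit: the identity $\frac{\underline{\boldsymbol{x}}_q}{|\underline{\boldsymbol{x}}_q|^2}\mathbb{E}_{\underline{\boldsymbol{x}}_q}=\underline{\eta}\,\partial_r$ on the slice (so $\overline{\vartheta}$ only sees $f|_{\Omega_{\underline{\eta}}}$), the corresponding right action, the Clifford relations of the frame $(e_1,\dots,e_p,\underline{\eta})$, and the componentwise divergence theorem with $dS_{\underline{\eta}}=n\,d\sigma$ --- all of which check out.
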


We can now state a Borel-Pompeiu formula in terms of the generalized poly-partial-slice Cauchy kernel as the following.
\begin{theorem}[Borel-Pompeiu formula]\label{Borel-Pompeiu}
	 Let the function $f\in C^k(\Omega _D)$. If $U$ is a domain in $\mathbb{R} _{*}^{p+q}$ such that $U_{\underline{\eta}}\subset \Omega _{\underline{\eta}}$ is a bounded domain in $\mathbb{R}^{p+1}$ with smooth boundary $\partial U_{\underline{\eta}}\subset \Omega _{\underline{\eta}}$ for some $\underline{\eta}\in \mathbb{S}$, then for any $\boldsymbol{x}\in U$, we have
	\begin{align}\label{B-P F}
		f(\boldsymbol{x})&=(-1)^k\int_{U _{\underline{\eta }}}{\mathcal{E}_{k,\boldsymbol{y}}(\boldsymbol{x})\overline{\vartheta }^kf(\boldsymbol{y})dV_{\underline{\eta }}(\boldsymbol{y})}\nonumber\\
		&-\sum_{j=1}^k{\int_{\partial U _{\underline{\eta }}}{(-1)^j\mathcal{E}_{j,\boldsymbol{y}}(\boldsymbol{x})dS_{\underline{\eta }}\overline{\vartheta }^{j-1}f(\boldsymbol{y})}},
	\end{align}
    where $dV_{\underline{\eta}}$ is the volume element on $U_{\underline{\eta}}$ on $\partial U_{\underline{\eta}}$, $dS_{\underline{\eta}}=n(\boldsymbol{y})d\sigma_{\underline{\eta}}(\boldsymbol{y})$, and $n(\boldsymbol{y})$ is the unit outward normal vector of $\partial U_{\underline{\eta}}$, $d\sigma_{\underline{\eta}}(\boldsymbol{y})$ is the surface area element on $\partial U_{\underline{\eta}}$.
\end{theorem}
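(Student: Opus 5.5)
The plan is to prove the formula by induction on $k$. The base case $k=1$ is the classical Borel--Pompeiu formula on the slice $U_{\underline{\eta}}$, and the inductive step applies the Stokes Theorem~\ref{Stokes Theorem Slice} together with the recursive identity of Theorem~\ref{theta K=K-1}. The whole argument is carried out on the fixed slice $\Omega_{\underline{\eta}}\subset\mathbb{R}^{p+1}$. There $\overline{\vartheta}$ reduces to the Dirac operator, and $\mathcal{E}_{k,\boldsymbol{y}}(\boldsymbol{x})$, viewed as a function of $\boldsymbol{y}$, is a combination of the classical poly-monogenic Cauchy kernels centred at $\boldsymbol{x}_{\pm\underline{\eta}}$. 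We first prove the formula for $\boldsymbol{x}\in U_{\underline{\eta}}$. We then pass to an arbitrary $\boldsymbol{x}=\boldsymbol{x}_p+r\underline{\omega}\in U$ using the Representation Formula (\ref{Representation Formula}). This is exactly how $\alpha,\beta$ enter the kernel, namely $\alpha=\frac12(1-\underline{\omega}\underline{\eta})$ and $\beta=\frac12(1+\underline{\omega}\underline{\eta})$, and it uses that $f$ is a slice function on $\Omega_D$.

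\textbf{Base case $k=1$.} Remove small balls $B_\varepsilon(\boldsymbol{x}_{\pm\underline{\eta}})$ from $U_{\underline{\eta}}$. Apply (\ref{S-T}) with $g=f$ and with the first function equal to $\mathcal{E}_{1,\cdot}(\boldsymbol{x})$, which is right null-solution of $\overline{\vartheta}$ in $\boldsymbol{y}$ by Theorem~\ref{kernel=0} (applied in the variable $\boldsymbol{y}$, by symmetry of the kernel). Then let $\varepsilon\to0$. The small-sphere integrals converge to $\alpha f(\boldsymbol{x}_{\underline{\eta}})+\beta f(\boldsymbol{x}_{-\underline{\eta}})$, because $C_{1,p}$ and $\sigma_p$ normalize the Cauchy kernel. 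By the Representation Formula this limit equals $f(\boldsymbol{x})$. The signs are fixed by orientation: the kernel is written in terms of $\boldsymbol{y}-\boldsymbol{x}$, so its $\boldsymbol{y}$-Dirac derivative carries a minus sign relative to the $\boldsymbol{x}$-derivative in Theorem~\ref{theta K=K-1}. This yields the $(-1)^k$ and $(-1)^j$ factors.

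\textbf{Inductive step.} Assume the formula holds for $k-1$ and write it for $f$, with volume term $(-1)^{k-1}\int_{U_{\underline{\eta}}}\mathcal{E}_{k-1,\boldsymbol{y}}(\boldsymbol{x})\,\overline{\vartheta}^{k-1}f\,dV$. Using Theorem~\ref{theta K=K-1} in the form $\mathcal{E}_{k-1,\boldsymbol{y}}(\boldsymbol{x})=-\,(\mathcal{E}_{k,\boldsymbol{y}}(\boldsymbol{x})\overline{\vartheta}_{\boldsymbol{y}})$, apply (\ref{S-T}) with the first function $\mathcal{E}_{k,\cdot}(\boldsymbol{x})$ and $g=\overline{\vartheta}^{k-1}f$. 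This converts the volume term into $(-1)^k\int\mathcal{E}_k\overline{\vartheta}^kf\,dV$ plus the new boundary term $-(-1)^k\int_{\partial U_{\underline{\eta}}}\mathcal{E}_k\,dS\,\overline{\vartheta}^{k-1}f$. That boundary term is exactly the $j=k$ summand, which completes the induction.

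\textbf{Main obstacle.} Justifying Stokes' theorem in the inductive step for $k\ge2$ is the delicate point. The kernel $\mathcal{E}_k$ has a singularity of order $|\boldsymbol{y}-\boldsymbol{x}_{\pm\underline{\eta}}|^{k-p}$, so it is integrable on $U_{\underline{\eta}}\subset\mathbb{R}^{p+1}$. Thus one again excises $\varepsilon$-balls and checks that the boundary integrals over the small spheres are $O(\varepsilon^{k-1})\to0$. The volume integrals converge by dominated convergence. A second point needing care is that Theorem~\ref{theta K=K-1} is stated for $\overline{\vartheta}$ in $\boldsymbol{x}$, while the argument uses its right action in $\boldsymbol{y}$ on the slice. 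On the slice, where the kernel is a function of $\boldsymbol{y}-\boldsymbol{x}_{\pm\underline{\eta}}$, this is the same computation up to the sign noted above. One also has to check that the constants $C_{k,p}$ make the recursion hold in the $\boldsymbol{y}$-variable, so that these signs are consistent throughout.
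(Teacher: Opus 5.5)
Your skeleton matches the paper's proof: induction on $k$, the slice Cauchy--Pompeiu formula for $k=1$, and (\ref{S-T}) applied to $\mathcal{E}_{k,\cdot}(\boldsymbol{x})$ and $\overline{\vartheta}^{k-1}f$. However, the inductive step does not close as you wrote it. With your identity $\mathcal{E}_{k,\boldsymbol{y}}(\boldsymbol{x})\overline{\vartheta}_{\boldsymbol{y}}=-\mathcal{E}_{k-1,\boldsymbol{y}}(\boldsymbol{x})$, Stokes' formula gives
\begin{align*}
\int_{U_{\underline{\eta}}}\mathcal{E}_{k-1,\boldsymbol{y}}(\boldsymbol{x})\,\overline{\vartheta}^{k-1}f\,dV_{\underline{\eta}}
=\int_{U_{\underline{\eta}}}\mathcal{E}_{k,\boldsymbol{y}}(\boldsymbol{x})\,\overline{\vartheta}^{k}f\,dV_{\underline{\eta}}
-\int_{\partial U_{\underline{\eta}}}\mathcal{E}_{k,\boldsymbol{y}}(\boldsymbol{x})\,dS_{\underline{\eta}}\,\overline{\vartheta}^{k-1}f .
\end{align*}
Multiplying by $(-1)^{k-1}$, the old volume term becomes $(-1)^{k-1}\int_{U_{\underline{\eta}}}\mathcal{E}_{k}\overline{\vartheta}^{k}f\,dV_{\underline{\eta}}+(-1)^{k}\int_{\partial U_{\underline{\eta}}}\mathcal{E}_{k}\,dS_{\underline{\eta}}\,\overline{\vartheta}^{k-1}f$. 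Both signs are the opposite of what you assert and of what (\ref{B-P F}) requires. Carried through the induction, your identity actually proves
\begin{align*}
f(\boldsymbol{x})=\sum_{j=1}^{k}\int_{\partial U_{\underline{\eta}}}\mathcal{E}_{j,\boldsymbol{y}}(\boldsymbol{x})\,dS_{\underline{\eta}}\,\overline{\vartheta}^{j-1}f(\boldsymbol{y})-\int_{U_{\underline{\eta}}}\mathcal{E}_{k,\boldsymbol{y}}(\boldsymbol{x})\,\overline{\vartheta}^{k}f(\boldsymbol{y})\,dV_{\underline{\eta}}(\boldsymbol{y}).
\end{align*}
For $k=2$ this disagrees with (\ref{B-P F}) in the sign of both $\mathcal{E}_2$-terms. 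Given your identity, the two versions differ by $2\int_{U_{\underline{\eta}}}\mathcal{E}_{1,\boldsymbol{y}}(\boldsymbol{x})\,\overline{\vartheta}f\,dV_{\underline{\eta}}$, which is not zero in general, so they cannot both hold. Your remark that the orientation sign ``yields the $(-1)^k$ and $(-1)^j$ factors'' is therefore backwards: that sign is exactly what removes the alternation.

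The alternating signs in (\ref{B-P F}) correspond to $\mathcal{E}_{k,\boldsymbol{y}}(\boldsymbol{x})\overline{\vartheta}_{\boldsymbol{y}}=+\mathcal{E}_{k-1,\boldsymbol{y}}(\boldsymbol{x})$, and that is what the paper's proof uses. It substitutes $\overline{\vartheta}\mathcal{E}_{n+1}=\mathcal{E}_n$ into the term $(\mathcal{E}_{n+1}\overline{\vartheta})$ of (\ref{S-T}), without distinguishing this right action in $\boldsymbol{y}$ from the left action in $\boldsymbol{x}$ computed in Theorem~\ref{theta K=K-1}. You are right that the two differ by a sign. But then the consistency check you postpone to your last sentence is the crux, not a formality. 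To prove the statement as written, you must fix the unspecified constants $C_{k,p}$ so that the recursion holds with a plus sign in $\boldsymbol{y}$. Concretely, replace $\mathcal{E}_j$ by $(-1)^{j-1}\mathcal{E}_j$ relative to the normalization of Theorem~\ref{theta K=K-1}, which is equivalent to $\overline{\vartheta}_{\boldsymbol{x}}\mathcal{E}_{k}=-\mathcal{E}_{k-1}$. With that convention your induction closes line by line as in the paper.

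There are also two smaller issues in your self-contained base case, which the paper avoids by simply citing the Xu--Sabadini Cauchy--Pompeiu formula. First, with the exponents literally as in Definition~\ref{P-Cauchy kernel}, $\mathcal{E}_1$ has order $|\boldsymbol{y}-\boldsymbol{x}_{\pm\underline{\eta}}|^{1-p}$ (your own count $k-p$). That is the Cauchy kernel of $\mathbb{R}^p$, not of the $(p+1)$-dimensional slice, so the $\varepsilon$-sphere integrals tend to $0$ rather than to $\alpha f(\boldsymbol{x}_{\underline{\eta}})+\beta f(\boldsymbol{x}_{-\underline{\eta}})$. You need the slice-normalized kernel, of order $-p$ with constant $1/\sigma_{p+1}$. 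Second, the passage through the Representation Formula needs $f$ to be a slice function and $\boldsymbol{x}_{\pm\underline{\eta}}\in U_{\underline{\eta}}$ (e.g.\ $U$ $p$-symmetric). Neither is among the stated hypotheses.
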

\begin{proof}
	We prove the result by mathematical induction on $ k $.
	
	When $ k = 1 $, we apply the Cauchy-Pompeiu formula (see \cite[Theorem 4.3]{Xu}), which gives
	\begin{align}\label{C-PF(j=1)}
		-\int_{\partial U _{\underline{\eta }}}{\mathcal{E} _1(\boldsymbol{x},\boldsymbol{y})dS_{\underline{\eta }}f(\boldsymbol{y})}=-f(\boldsymbol{x})-\int_{U _{\underline{\eta }}}{\mathcal{E} _1(\boldsymbol{x},\boldsymbol{y})\overline{\vartheta }f(\boldsymbol{y})dV_{\underline{\eta }}(\boldsymbol{y})}.
	\end{align}
	Assume the formula holds for $ k = n $. We prove it for $ k = n+1 $.
	
	By Stokes' Theorem given in equation (\ref{S-T}), we have
	\begin{align*}
		&\int_{\partial U_{\underline{\eta}}} \mathcal{E}_{n+1}(\boldsymbol{x}, \boldsymbol{y}) \, dS_{\underline{\eta}} \, \overline{\vartheta}^n f(\boldsymbol{y}) \\
		=& \int_{U_{\underline{\eta}}} \left[ (\overline{\vartheta} \mathcal{E}_{n+1})(\boldsymbol{x}, \boldsymbol{y}) \cdot \overline{\vartheta}^n f(\boldsymbol{y}) + \mathcal{E}_{n+1}(\boldsymbol{x}, \boldsymbol{y}) \cdot \overline{\vartheta}^{n+1} f(\boldsymbol{y}) \right] dV_{\underline{\eta}}(\boldsymbol{y}).
	\end{align*}
	
	Using the identity $ \overline{\vartheta} \mathcal{E}_{n+1} = \mathcal{E}_n $, this becomes
	\begin{align*}
		&\int_{\partial U_{\underline{\eta}}} \mathcal{E}_{n+1}(\boldsymbol{x}, \boldsymbol{y}) \, dS_{\underline{\eta}} \, \overline{\vartheta}^n f(\boldsymbol{y})\\
		=& \int_{U_{\underline{\eta}}} \mathcal{E}_n(\boldsymbol{x}, \boldsymbol{y}) \, \overline{\vartheta}^n f(\boldsymbol{y}) \, dV_{\underline{\eta}}(\boldsymbol{y}) 
		+ \int_{U_{\underline{\eta}}} \mathcal{E}_{n+1}(\boldsymbol{x}, \boldsymbol{y}) \, \overline{\vartheta}^{n+1} f(\boldsymbol{y}) \, dV_{\underline{\eta}}(\boldsymbol{y}).
	\end{align*}
	
	Adding this to the induction hypothesis yields
	\begin{align*}
		f(\boldsymbol{x}) =& (-1)^n \int_{U_{\underline{\eta}}} \mathcal{E}_n(\boldsymbol{x}, \boldsymbol{y}) \, \overline{\vartheta}^n f(\boldsymbol{y}) \, dV_{\underline{\eta}}(\boldsymbol{y}) \\
		-& \sum_{j=1}^n \int_{\partial U_{\underline{\eta}}} (-1)^j \mathcal{E}_j(\boldsymbol{x}, \boldsymbol{y}) \, dS_{\underline{\eta}} \, \overline{\vartheta}^{j-1} f(\boldsymbol{y}) \\
		=& (-1)^{n+1} \int_{U_{\underline{\eta}}} \mathcal{E}_{n+1}(\boldsymbol{x}, \boldsymbol{y}) \, \overline{\vartheta}^{n+1} f(\boldsymbol{y}) \, dV_{\underline{\eta}}(\boldsymbol{y})  \\
		-& \sum_{j=1}^{n+1} \int_{\partial U_{\underline{\eta}}} (-1)^j \mathcal{E}_j(\boldsymbol{x}, \boldsymbol{y}) \, dS_{\underline{\eta}} \, \overline{\vartheta}^{j-1} f(\boldsymbol{y}).
	\end{align*}
	Hence, the formula holds for $ k = n+1 $, and the proof is complete.
\end{proof}
An immediate consequence of the Borel-Pompeiu formula introduced above is a Cauchy integral formula stated below.
\begin{theorem}[Cauchy integral formula]\label{Cauchy integral formula}
	Let $f\in C^k\left( \Omega _D \right) \cap \mathrm{ker}\overline{\vartheta }^k$. If $U$ is a domain in $\mathbb{R} _{*}^{p+q}$ such that $U_{\underline{\eta}}\subset \Omega _{\underline{\eta}}$ is a bounded domain in $\mathbb{R}^{p+1}$ with smooth boundary $\partial U_{\underline{\eta}}\subset \Omega _{\underline{\eta}}$ for some $\underline{\eta}\in \mathbb{S}$, then 
	\begin{align*}
		\sum_{j=0}^{k-1}{(-1)^j\int_{\partial U_{\underline{\eta }}}{\mathcal{E}_{j+1,\boldsymbol{y}}(\boldsymbol{x})dS_{\underline{\eta }}\overline{\vartheta }^jf(\boldsymbol{y})}}=\begin{cases}
			0,\quad &\boldsymbol{x}\notin U,\\
			f\left( \boldsymbol{x} \right) ,&\boldsymbol{x}\in U.\\
		\end{cases}
	\end{align*}
	
\end{theorem}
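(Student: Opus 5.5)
The plan splits into the two cases $\boldsymbol{x}\in U$ and $\boldsymbol{x}\notin U$. For $\boldsymbol{x}\in U$ it is the Borel--Pompeiu formula (Theorem \ref{Borel-Pompeiu}) with the volume term removed. For $\boldsymbol{x}\notin U$ it is Stokes' Theorem \ref{Stokes Theorem Slice} applied directly, since then the kernels have no singularity in $\overline{U_{\underline{\eta}}}$.

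\emph{Case $\boldsymbol{x}\in U$.} Since $f\in\ker\overline{\vartheta}^k$, the volume integral $(-1)^k\int_{U_{\underline{\eta}}}\mathcal{E}_{k,\boldsymbol{y}}(\boldsymbol{x})\overline{\vartheta}^kf(\boldsymbol{y})\,dV_{\underline{\eta}}$ in (\ref{B-P F}) vanishes. What remains is
\[
f(\boldsymbol{x})=-\sum_{j=1}^k(-1)^j\int_{\partial U_{\underline{\eta}}}\mathcal{E}_{j,\boldsymbol{y}}(\boldsymbol{x})\,dS_{\underline{\eta}}\,\overline{\vartheta}^{j-1}f(\boldsymbol{y}).
\]
Reindex with $j\mapsto j+1$; the factor $-(-1)^{j+1}=(-1)^j$ then gives exactly the left-hand side of the claim. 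This part is pure bookkeeping.

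\emph{Case $\boldsymbol{x}\notin U$.} Now $\boldsymbol{y}\mapsto\mathcal{E}_{j,\boldsymbol{y}}(\boldsymbol{x})$ is $C^1$ on $\overline{U_{\underline{\eta}}}$. For each $j=1,\dots,k$, apply (\ref{S-T}) with $f$ replaced by the kernel $\mathcal{E}_j$ and $g=\overline{\vartheta}^{j-1}f$:
\[
\int_{\partial U_{\underline{\eta}}}\mathcal{E}_j\,dS_{\underline{\eta}}\,\overline{\vartheta}^{j-1}f=\int_{U_{\underline{\eta}}}\bigl[(\mathcal{E}_j\overline{\vartheta})\overline{\vartheta}^{j-1}f+\mathcal{E}_j\overline{\vartheta}^{j}f\bigr]dV_{\underline{\eta}}.
\]
By Theorem \ref{theta K=K-1} and Theorem \ref{kernel=0}, in the form used in the proof of Theorem \ref{Borel-Pompeiu}, we have $\mathcal{E}_j\overline{\vartheta}=\mathcal{E}_{j-1}$ for $j\ge2$ and $\mathcal{E}_1\overline{\vartheta}=0$. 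Write $I_j=\int_{U_{\underline{\eta}}}\mathcal{E}_j\overline{\vartheta}^jf\,dV_{\underline{\eta}}$. The $j$-th boundary term then equals $I_{j-1}+I_j$, with $I_0:=0$. Multiplying by $(-1)^{j-1}$ and summing over $j$ telescopes the alternating sum to $(-1)^{k-1}I_k$. Finally $I_k=0$ because $\overline{\vartheta}^kf=0$, so the total is zero.

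\emph{Main obstacle.} The delicate point is the application of Stokes' theorem with the kernel acting from the left, in the variable $\boldsymbol{y}$. The identity $\overline{\vartheta}\mathcal{E}_j=\mathcal{E}_{j-1}$ is stated in $\boldsymbol{x}$, and must be transferred to a right action in $\boldsymbol{y}$ on the slice $\Omega_{\underline{\eta}}$. I would do this by restricting to the slice: there $\overline{\vartheta}$ is the Dirac operator of $\mathbb{R}^{p+1}$, and the kernels are built from $\boldsymbol{y}-\boldsymbol{x}_{\pm\underline{\omega}}$. By symmetry these satisfy the analogous right-hand identity in $\boldsymbol{y}$, up to sign conventions which the Borel--Pompeiu proof already fixes.

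A second point is the geometric interpretation of ``$\boldsymbol{x}\notin U$''. It must mean that both points $\boldsymbol{x}_{\pm\underline{\omega}}$ on the slice, i.e.\ the full sphere $[\boldsymbol{x}]$, avoid $\overline{U_{\underline{\eta}}}$. I would take this as the intended meaning, so that the kernels are smooth there.
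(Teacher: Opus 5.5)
Your proof is correct. For $\boldsymbol{x}\in U$ it coincides with the paper's argument: drop the volume term in (\ref{B-P F}) and reindex.

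For $\boldsymbol{x}\notin U$ you take a genuinely different and more complete route. The paper only asserts that the boundary sum vanishes ``by Theorem~\ref{kernel=0}'', i.e.\ by $\overline{\vartheta}^k\mathcal{E}_k=0$. That identity concerns the kernel alone, and it does not by itself force an alternating sum of boundary integrals to vanish; the individual terms are generally nonzero. You instead apply (\ref{S-T}) to each term, use $\mathcal{E}_j\overline{\vartheta}=\mathcal{E}_{j-1}$ and $\mathcal{E}_1\overline{\vartheta}=0$ away from the singularities, and telescope to $(-1)^{k-1}I_k$. That remainder is zero precisely because $\overline{\vartheta}^kf=0$. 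In effect you rerun the induction of Theorem~\ref{Borel-Pompeiu} without excising a neighbourhood of the singular points, which is why no $f(\boldsymbol{x})$ term appears.

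This gives two things the paper's one-liner lacks. It shows exactly where the hypothesis $f\in\ker\overline{\vartheta}^k$ enters in the exterior case. It also makes explicit the geometric condition the paper leaves implicit: $[\boldsymbol{x}]\cap\overline{U_{\underline{\eta}}}=\emptyset$, i.e.\ $\boldsymbol{x}_p\pm r\underline{\eta}\notin\overline{U_{\underline{\eta}}}$, so that the kernels are $C^1$ on $\overline{U_{\underline{\eta}}}$.

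Your ``main obstacle'' is easily dispatched. The coefficients $\alpha,\beta$ are left factors independent of $\boldsymbol{y}$. Each building block is either scalar or a $\boldsymbol{y}$-gradient of a scalar function of $|\boldsymbol{y}-(\boldsymbol{x}_p\pm r\underline{\eta})|$. Hence the left and right actions of the Dirac operator on the $\underline{\eta}$-slice agree, up to a sign absorbed into the normalisation $C_{k,p}$. That is the same convention the proof of Theorem~\ref{Borel-Pompeiu} already adopts.
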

\begin{proof}
	If the function $f\in \ker\overline{\vartheta }^k$, meaning it satisfies $\overline{\vartheta }^kf=0$, then according to Theorem \ref{Borel-Pompeiu}, the volume integral term in the Borel-Pompeiu formula vanishes. That is,
	\begin{align*}
		(-1)^k\int_{U_{\underline{\eta }}}{\mathcal{E}_{k,\boldsymbol{y}}(\boldsymbol{x})\overline{\vartheta }^kf(\boldsymbol{y})dV_{\underline{\eta }}(\boldsymbol{y})}=0.
	\end{align*}
	Thus, equation (\ref{B-P F}) can be simplified to
	\begin{align*}
		f(x)=-\sum_{j=1}^k{\int_{\partial U_{\underline{\eta }}}{(-1)^j\mathcal{E}_{j,\boldsymbol{y}}(\boldsymbol{x})dS_{\underline{\eta }}\overline{\vartheta }^{j-1}f(\boldsymbol{y})}}.
	\end{align*}
	When $\boldsymbol{x}\notin U$, it is outside the integration region. By Theorem \ref{kernel=0}, the integral on the right-hand side is zero. Therefore,
	\begin{align*}
		\sum_{j=0}^{k-1}{(-1)^j\int_{\partial U_{\underline{\eta }}}{\mathcal{E}_{j+1,\boldsymbol{y}}(\boldsymbol{x})dS_{\underline{\eta }}\overline{\vartheta }^jf(\boldsymbol{y})}}=0.
	\end{align*} 
	When $ \boldsymbol{x} \in U $, it lies within the integration domain. We can prove the following result using the classical approach of integral operators
	\begin{align*}
		\sum_{j=0}^{k-1}{(-1)^j\int_{\partial U_{\underline{\eta }}}{\mathcal{E}_{j+1,\boldsymbol{y}}(\boldsymbol{x})dS_{\underline{\eta }}\overline{\vartheta }^jf(\boldsymbol{y})}}=f\left( \boldsymbol{x} \right).
	\end{align*}
	Combining the above two cases, we conclude that
	\begin{align*}
		\sum_{j=0}^{k-1}{(-1)^j\int_{\partial U_{\underline{\eta }}}{\mathcal{E}_{j+1,\boldsymbol{y}}(\boldsymbol{x})dS_{\underline{\eta }}\overline{\vartheta }^jf(\boldsymbol{y})}}=\begin{cases}
			0,\quad &\boldsymbol{x}\notin U,\\
			f\left( \boldsymbol{x} \right) ,&\boldsymbol{x}\in U,\\
		\end{cases}
	\end{align*}
	which completes the proof.
\end{proof}

\section{Existence and continuity of the higher order Teodorescu transform}
Now, we introduce the higher order Teodorescu transform $T_k$ as
\begin{align*}
	T_kf(\boldsymbol{x}):=\int_{\Omega _D}{\mathcal{E}_{k,\boldsymbol{y}}(\boldsymbol{x}) f\left( \boldsymbol{y} \right) dV\left( \boldsymbol{y} \right)},
\end{align*}
where $ \mathcal{E}_{k,\boldsymbol{y}}(\boldsymbol{x}) $ denotes the generalized poly-partial-slice Cauchy kernel as specified in Definition \ref{P-Cauchy kernel}, and $dV(\boldsymbol{y})$ denotes the volume element in $\Omega_D \subset \mathbb{R}^{p+q}$. 

In the following content, we will prove the existence and continuity of $T_k$. We also refer to \cite{Gu4,Gu2,Gu1} for more discussions on the Teodorescu transform in the quaternionic setting, where similar conclusions were obtained. \par

Before proving the main results, we first give a useful lemma.  
It helps us estimate integrals with weak singularities.
\begin{lemma}\label{lemma}
	Let $\Omega_D$ be a bounded domain and $f\in L^1\left( \Omega _D\right) $. Then for any $\gamma>0 $, the integral 
	\begin{align*}
		F_{\pm}\left( \boldsymbol{x} \right) =\int_{\Omega _D}{\frac{1}{|\boldsymbol{y}-\boldsymbol{x}_{\pm \underline{\omega }}|^{p-\gamma}}f\left( \boldsymbol{y} \right) dV(\boldsymbol{y})}
	\end{align*}
	belongs to $L^1\left( \Omega _D \right) $.
\end{lemma}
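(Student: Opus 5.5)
The plan is the classical Fubini--Tonelli argument for weakly singular integral operators, with the singular kernel handled by a uniform bound on its integral. Write $K_\pm(\boldsymbol{x},\boldsymbol{y}) := |\boldsymbol{y}-\boldsymbol{x}_{\pm\underline{\omega}}|^{-(p-\gamma)}$. By Tonelli's theorem,
\begin{align*}
\int_{\Omega_D}|F_\pm(\boldsymbol{x})|\,dV(\boldsymbol{x}) \le \int_{\Omega_D}|f(\boldsymbol{y})|\left(\int_{\Omega_D}K_\pm(\boldsymbol{x},\boldsymbol{y})\,dV(\boldsymbol{x})\right)dV(\boldsymbol{y}).
\end{align*}
This also shows that $F_\pm(\boldsymbol{x})$ is finite for a.e.\ $\boldsymbol{x}$ and is measurable. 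It therefore suffices to prove
\begin{align*}
M:=\sup_{\boldsymbol{y}\in\Omega_D}\int_{\Omega_D}K_\pm(\boldsymbol{x},\boldsymbol{y})\,dV(\boldsymbol{x})<\infty,
\end{align*}
since then $\|F_\pm\|_{L^1}\le M\|f\|_{L^1}$. If $\gamma\ge p$, the kernel is bounded on the bounded set $\Omega_D$ and there is nothing to prove, so we may assume $0<\gamma<p$.

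To estimate the inner integral, use the $p$-symmetric structure. Write $\boldsymbol{x}=\boldsymbol{x}_p+r\underline{\omega}$ in polar coordinates in the $\mathbb{R}^q$ factor, so that $dV(\boldsymbol{x}) = r^{q-1}\,d\boldsymbol{x}_p\,dr\,d\sigma(\underline{\omega})$ with $r>0$, $\underline{\omega}\in\mathbb{S}$. The key observation is that $\boldsymbol{x}_{\pm\underline{\omega}}$ is built from $(\boldsymbol{x}_p,r)$ and $\underline{\omega}$, while $\boldsymbol{y}$ is fixed. I expect (and would check from the paper's convention $\boldsymbol{x}_{\pm\underline{\omega}}=\boldsymbol{x}_p\pm\underline{\omega}r$, evaluated in the slice of $\boldsymbol{y}$) that
\begin{align*}
|\boldsymbol{y}-\boldsymbol{x}_{\pm\underline{\omega}}| \ge |(\boldsymbol{y}_p,\pm|\underline{\boldsymbol{y}}_q|)-(\boldsymbol{x}_p,r)|,
\end{align*}
the distance in $\mathbb{R}^{p+1}$ between the slice representatives. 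In the worst case this is an equality.

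Next, bound $r^{q-1}\le R^{q-1}$, where $R$ bounds $\Omega_D$, and integrate out $\underline{\omega}$ to get a factor $\sigma_q$. The inner integral is then dominated by
\begin{align*}
C\int_{B_{2R}\subset\mathbb{R}^{p+1}}\frac{d\boldsymbol{z}}{|\boldsymbol{z}|^{p-\gamma}} = C'\int_0^{2R}\rho^{p-(p-\gamma)}\,d\rho<\infty,
\end{align*}
uniformly in $\boldsymbol{y}$. The singularity is weak because the integration is $(p+1)$-dimensional while the exponent $p-\gamma$ is less than $p+1$. An alternative is to keep the full $(p+q)$-dimensional integral, where the singular set $\{\boldsymbol{x}:\boldsymbol{x}_{\pm\underline{\omega}}=\boldsymbol{y}\}$ is only a point. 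I would use the slice reduction because it is cleaner.

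The main obstacle is this reduction step: making precise what $\boldsymbol{x}_{\pm\underline{\omega}}$ means when $\boldsymbol{y}$ and $\boldsymbol{x}$ lie on different slices, and justifying the lower bound on the distance, including near $r=0$ where $\underline{\omega}$ is undetermined. That set has measure zero, so it does not affect the integral. If the distance inequality fails in general, I would fall back to the triangle inequality,
\begin{align*}
|\boldsymbol{y}-\boldsymbol{x}_{\pm\underline{\omega}}|\ge\big||\boldsymbol{y}|-|\boldsymbol{x}_{\pm\underline{\omega}}|\big|.
\end{align*}
I would then estimate directly the measure of the set $\{\boldsymbol{x} : |\boldsymbol{y}-\boldsymbol{x}_{\pm\underline{\omega}}|<t\}$ by a constant times $t^{p+1}$. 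A layer-cake computation then gives the same finiteness, since $p-\gamma<p+1$. Once $M<\infty$ is established, the Tonelli estimate finishes the proof.
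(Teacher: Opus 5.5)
Your argument is correct and has the same skeleton as the paper's proof. By Tonelli, everything reduces to a bound on $\int_{\Omega_D}|\boldsymbol y-\boldsymbol x_{\pm\underline\omega}|^{-(p-\gamma)}\,dV(\boldsymbol x)$ that is uniform in $\boldsymbol y$, which gives $\|F_\pm\|_{L^1}\le M\|f\|_{L^1}$. You differ from the paper in how you prove that bound. The paper simply cites Hedberg's Riesz-potential estimate and writes it as an integral in $dV(\boldsymbol y)$. Tonelli, however, needs the $dV(\boldsymbol x)$-integral, in which $\boldsymbol x$ enters through $\boldsymbol x_{\pm\underline\omega}$ rather than as a plain translate. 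Your slice reduction proves exactly the estimate that is needed: polar coordinates in $\mathbb{R}^q$, then $r^{q-1}\le R^{q-1}$, then a weakly singular integral over $B_{2R}\subset\mathbb{R}^{p+1}$ with exponent $p-\gamma<p+1$. So your version is self-contained and fills in the step the paper glosses over.

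The step you flag as the main obstacle causes no trouble. Under the intended convention $\boldsymbol x_{\pm\underline\omega}=\boldsymbol x_p\pm r\underline\eta$ with $\underline\eta=\underline{\boldsymbol y}_q/|\underline{\boldsymbol y}_q|$, one has $|\boldsymbol y-\boldsymbol x_{\pm\underline\omega}|^2=|\boldsymbol y_p-\boldsymbol x_p|^2+(|\underline{\boldsymbol y}_q|\mp r)^2$, so your inequality is an identity. If $\underline\omega$ is instead read literally as the direction of $\underline{\boldsymbol x}_q$, your version with $-|\underline{\boldsymbol y}_q|$ fails for the minus sign. In that case the bound $|\boldsymbol y-\boldsymbol x_{\pm\underline\omega}|\ge|(\boldsymbol y_p,|\underline{\boldsymbol y}_q|)-(\boldsymbol x_p,r)|$ still holds for both signs, by the reverse triangle inequality in $\mathbb{R}^q$, and nothing else in your argument changes. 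You should drop the fallback via $\bigl||\boldsymbol y|-|\boldsymbol x_{\pm\underline\omega}|\bigr|$. Its sublevel sets are shells of measure of order $t$, not $t^{p+1}$, so it would only handle the case $p-\gamma<1$.
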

\begin{proof}
	Notice that $\Omega _D$ is bounded and $\gamma>0$. To show the result, we first look at the double integral 
	\begin{align*}
		\int_{\Omega _D}{\left( \int_{\Omega _D}{\frac{1}{|\boldsymbol{y}-\boldsymbol{x}_{\underline{\omega}}|^{p-\gamma}}dV(\boldsymbol{x})} \right) |f\left( \boldsymbol{y} \right) |dV(\boldsymbol{y})}.
	\end{align*}
	According to a result given by Hedberg (\cite[Theorem~1]{Hed72}), there is a constant $C$ such that
	\begin{align*}
		\int_{\Omega _D}{\frac{1}{|\boldsymbol{y}-\boldsymbol{x}_{\underline{\omega }}|^{p-\gamma}}dV(\boldsymbol{y})}\leqslant C\left( \text{diam}B \right) ^{\gamma},
	\end{align*}
	where $B$ is the smallest cube that contains $\Omega_D$ and $\text{diam}B$ stands for the diameter of $B$. This means that the inner integral is bounded. So we can write
	\begin{align*}
		&\int_{\Omega _D}{\left( \int_{\Omega _D}{\frac{1}{|\boldsymbol{y}-\boldsymbol{x}_{\underline{\omega }}|^{p-\gamma}}dV(\boldsymbol{x})} \right) |f\left( \boldsymbol{y} \right) |dV(\boldsymbol{y})}\\
		\leqslant &M_{\left( \Omega _D,\gamma \right)} \int_{\Omega _D}{|f\left( \boldsymbol{y} \right) |dV(\boldsymbol{y})}
		=M_{\left( \Omega _D,\gamma \right)} \left\| f \right\| _{L^1},
	\end{align*}
	where $M_{\left( \Omega _D,\gamma \right)}$ is a constant depending on $\Omega _D$ and $\gamma$. The right-hand side is just $M_{\left( \Omega _D,\gamma \right)} \left\| f \right\| _{L^1}$, which is finite since $f\in L^1\left( \Omega _D \right)$.
	Now, by Fubini’s Theorem, we get
	\begin{align*}
		&\int_{\Omega _D}{\left( \int_{\Omega _D}{\frac{1}{|\boldsymbol{y}-\boldsymbol{x}_{\underline{\omega }}|^{p-\gamma}}dV(\boldsymbol{x})} \right) |f\left( \boldsymbol{y} \right) |dV(\boldsymbol{y})}\\
		=&\int_{\Omega _D}{\left( \int_{\Omega _D}{\frac{1}{|\boldsymbol{y}-\boldsymbol{x}_{\underline{\omega }}|^{p-\gamma}}|f\left( \boldsymbol{y} \right) |dV(\boldsymbol{y})} \right) dV(\boldsymbol{x})}\\
		=&\int_{\Omega _D}{F\left( \boldsymbol{x} \right) dV(\boldsymbol{x})}.
	\end{align*}
	Since this integral is also finite, it means that $F_+(\boldsymbol{x})$ is in $L^1\left( \Omega _D \right)$.
	Similarly, we can obtain that $F_-(\boldsymbol{x})$ is in $L^1\left( \Omega _D \right)$. The conclusion is proved.
\end{proof}
With this lemma, we can now show that the singular integral $T_k f$ is well-defined.
\begin{theorem}\label{exist theorem}
	For $k \in \mathbb{N}$ and $f\in L^1\left( \Omega _D \right) $, the integral $T_kf(\boldsymbol{x})$ exists for all $\boldsymbol{x}\in \Omega_D$.
\end{theorem}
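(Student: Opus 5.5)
We reduce Theorem \ref{exist theorem} to Lemma \ref{lemma} by bounding the kernel $\mathcal{E}_{k,\boldsymbol{y}}(\boldsymbol{x})$ pointwise by weakly singular powers of $|\boldsymbol{y}-\boldsymbol{x}_{\pm\underline{\omega}}|$. Throughout, $\Omega_D$ is taken to be bounded, as in Lemma \ref{lemma}, and $k$, $p$ and the constants $\alpha,\beta,C_{k,p},\sigma_p$ are fixed. We treat the two parities of $k$ separately, using Definition \ref{P-Cauchy kernel}.

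\textbf{Odd $k$.} Since $|\boldsymbol{y}-\boldsymbol{x}_{\pm\underline{\omega}}|$ is the Clifford norm of a vector, each summand satisfies
\begin{align*}
\left|\frac{\boldsymbol{y}-\boldsymbol{x}_{\pm\underline{\omega}}}{\sigma_p|\boldsymbol{y}-\boldsymbol{x}_{\pm\underline{\omega}}|^{p-k+1}}\right| \leqslant \frac{C}{|\boldsymbol{y}-\boldsymbol{x}_{\pm\underline{\omega}}|^{p-k}} .
\end{align*}
\textbf{Even $k$.} Here the kernel is already of the form $|\boldsymbol{y}-\boldsymbol{x}_{\pm\underline{\omega}}|^{-(p-k)}$.

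In both cases this gives
\begin{align*}
|\mathcal{E}_{k,\boldsymbol{y}}(\boldsymbol{x})|\leqslant C\left(|\boldsymbol{y}-\boldsymbol{x}_{\underline{\omega}}|^{-(p-\gamma)}+|\boldsymbol{y}-\boldsymbol{x}_{-\underline{\omega}}|^{-(p-\gamma)}\right), \qquad \gamma=k\geqslant 1>0,
\end{align*}
with $C$ depending only on $k,p,\alpha,\beta$. If $p-k\leqslant 0$ the kernel is not singular at all. Since $\Omega_D$ is bounded, the kernel is then bounded on $\Omega_D\times\Omega_D$, and the integral exists trivially for $f\in L^1$.

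By Clifford norm submultiplicativity (up to a dimensional constant), $|T_kf(\boldsymbol{x})|\leqslant C\,(F_+(\boldsymbol{x})+F_-(\boldsymbol{x}))$, where $F_\pm$ are built with $|f|$ in place of $f$. Lemma \ref{lemma} gives $F_\pm\in L^1(\Omega_D)$. The Fubini argument in its proof also shows that the integral defining $F_\pm(\boldsymbol{x})$ converges absolutely, hence so does the integral defining $T_kf(\boldsymbol{x})$.

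\textbf{Main obstacle.} The statement asks for existence at \emph{every} $\boldsymbol{x}$, while the lemma only gives integrability in $\boldsymbol{x}$, that is, finiteness almost everywhere. For merely $L^1$ data, everywhere finiteness genuinely needs more, so this step is the weak point of the plan. I would try two things, in this order.

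First, read "exists" as absolute convergence for almost every $\boldsymbol{x}$, and note that this is exactly what Fubini delivers.

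Second, for pointwise existence, use Hedberg's estimate as stated in the proof of Lemma \ref{lemma}. For fixed $\boldsymbol{x}$, the function $\boldsymbol{y}\mapsto|\boldsymbol{y}-\boldsymbol{x}_{\pm\underline{\omega}}|^{-(p-\gamma)}$ is integrable, uniformly bounded by $C(\mathrm{diam}B)^{\gamma}$. One then splits $\Omega_D$ into a small ball around $\boldsymbol{x}_{\pm\underline{\omega}}$ and its complement. On the complement the kernel is bounded, so $f\in L^1$ suffices. On the ball one needs control of $f$, for instance $f\in L^s$ with $s$ large enough that the kernel lies in the dual exponent class. I expect the authors' intended argument to be the combination of the kernel bound above with Lemma \ref{lemma}.
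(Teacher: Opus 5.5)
Your plan is the paper's own proof. You bound $|\mathcal{E}_{k,\boldsymbol{y}}(\boldsymbol{x})|$ in both parities by $C\bigl(|\boldsymbol{y}-\boldsymbol{x}_{\underline{\omega}}|^{-(p-k)}+|\boldsymbol{y}-\boldsymbol{x}_{-\underline{\omega}}|^{-(p-k)}\bigr)$, apply Lemma~\ref{lemma} in the singular range, and use boundedness of the kernel for large $k$.

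The obstacle you flag is real, and the paper does not overcome it. After invoking the lemma, the paper states existence ``for almost all $\boldsymbol{x}\in\Omega_D$'' and then concludes ``for all $\boldsymbol{x}$'' with no further argument.

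For $k<p$, no argument can close this gap under the hypothesis $f\in L^1(\Omega_D)$ alone. Fix $\boldsymbol{x}$. If $\int_{\Omega_D}|\mathcal{E}_{k,\boldsymbol{y}}(\boldsymbol{x})|\,|f(\boldsymbol{y})|\,dV(\boldsymbol{y})$ converged for every $f\in L^1$, then $\boldsymbol{y}\mapsto\mathcal{E}_{k,\boldsymbol{y}}(\boldsymbol{x})$ would have to be essentially bounded. That fails at the singular point whenever the coefficient of the singular term is nonzero. Concretely, take $f(\boldsymbol{y})=|\boldsymbol{y}-\boldsymbol{x}_{\underline{\omega}}|^{-(p+q)+\epsilon}$ on a small ball, with $0<\epsilon\le p-k$. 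This $f$ is integrable, but in polar coordinates in $\mathbb{R}^{p+q}$ it produces the divergent radial integral $\int_0^\delta \rho^{\epsilon-(p-k)-1}\,d\rho$.

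So your first reading is exactly what this argument, yours or the paper's, actually proves: absolute convergence for almost every $\boldsymbol{x}$, and for every $\boldsymbol{x}$ when $k\ge p$, where you correctly note the kernel is bounded. Your second option is a strengthening of the hypothesis on $f$, not a proof of the statement as written.
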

\begin{proof}
	We consider two cases, depending on whether $ k $ is odd or even.
	
	\textbf{Case 1: $k$ is odd.}
	
	First, we estimate the norm of $ \mathcal{E}_{k,\boldsymbol{y}}(\boldsymbol{x}) $. When $ k \leq p $, we have
	\begin{align}\label{estimate k odd}
		\left| \mathcal{E}_{k,\boldsymbol{y}}(\boldsymbol{x}) \right|&=\left| \alpha C_{k,p}\cdot \frac{\boldsymbol{y}-\boldsymbol{x}_{\underline{\omega }}}{\sigma _p|\boldsymbol{y}-\boldsymbol{x}_{\underline{\omega }}|^{p-k+1}}+\beta C_{k,p}\cdot \frac{\boldsymbol{y}-\boldsymbol{x}_{-\underline{\omega }}}{\sigma _p|\boldsymbol{y}-\boldsymbol{x}_{-\underline{\omega }}|^{p-k+1}} \right|\nonumber\\
		&\leqslant C_1\cdot \frac{1}{|\boldsymbol{y}-\boldsymbol{x}_{\underline{\omega }}|^{p-k}}+C_2\cdot \frac{1}{|\boldsymbol{y}-\boldsymbol{x}_{-\underline{\omega }}|^{p-k}},
	\end{align}
	where $C_1$ and $C_2$ are constants.
	Using Lemma \ref{lemma} leads to existence of $T_kf(\boldsymbol{x})$ for almost all $\boldsymbol{x} \in \Omega _D$.
	
	If $k>p$ is fixed, we have
	\begin{align*}
		\left| \mathcal{E}_{k,\boldsymbol{y}}(\boldsymbol{x}) \right|\leqslant C_1\cdot |\boldsymbol{y}-\boldsymbol{x}_{\underline{\omega }}|^{k-p}+C_2\cdot |\boldsymbol{y}-\boldsymbol{x}_{-\underline{\omega }}|^{k-p}.
	\end{align*}

	Looking at the right-hand side of this inequality, we can clearly see that when $ k > p $, the kernel becomes bounded near $ \boldsymbol{x} = \boldsymbol{y} $. Thus, $ T_k f(\boldsymbol{x}) $ is well-defined.
	
	\textbf{Case 2: $ k $ is even.}
	
     When $ k \leq p $, we get
	\begin{align}\label{estimate k even}
		\left| \mathcal{E}_{k,\boldsymbol{y}}(\boldsymbol{x}) \right|&=\left| \alpha C_{k,p}\cdot \frac{1}{\sigma _p|\boldsymbol{y}-\boldsymbol{x}_{\underline{\omega }}|^{p-k}}+\beta C_{k,p}\cdot \frac{1}{\sigma _p|\boldsymbol{y}-\boldsymbol{x}_{-\underline{\omega }}|^{p-k}} \right|\nonumber\\
		&\leqslant C_3\cdot \frac{1}{|\boldsymbol{y}-\boldsymbol{x}_{\underline{\omega }}|^{p-k}}+C_4\cdot \frac{1}{|\boldsymbol{y}-\boldsymbol{x}_{-\underline{\omega }}|^{p-k}},
	\end{align}
	where $C_3$ and $C_4$ are constants.
	This allows us to apply Lemma \ref{lemma}, just like in the odd case.
	
	For $ k > p $, we again find that
	\begin{align*}
		\left| \mathcal{E}_{k,\boldsymbol{y}}(\boldsymbol{x}) \right|\leqslant C_3\cdot |\boldsymbol{y}-\boldsymbol{x}_{\underline{\omega }}|^{k-p}+C_4\cdot |\boldsymbol{y}-\boldsymbol{x}_{-\underline{\omega }}|^{k-p}.
	\end{align*}
	This implies boundedness near $ \boldsymbol{x} = \boldsymbol{y} $, so $ T_k f(\boldsymbol{x}) $ is well-defined. Hence, we can conclude that $ T_k f(\boldsymbol{x}) $ exists for all $ \boldsymbol{x} \in \Omega_D $ in every case.
\end{proof}
After knowing that $T_k f$ exists, we also want to know if it keeps the integrability of $f$.  
The next theorem shows that $T_k$ maps $L^1$-functions into $L^s$-functions under some conditions on $s$.
\begin{theorem}
	Let $\Omega_D$ be a bounded domain and $f\in L^1\left( \Omega _D\right) $. If
	\begin{itemize}
		\item[\rm(i)] $1 \leq s < \frac{p}{p-k+1}$, with $k$ is odd and $p >k-1$,
		\item[\rm(ii)] $1 \leq s < \frac{p}{p-k}$, with $k$ is even and $p > k$,
		\item[\rm(iii)] $1 \leq s \leq +\infty$, with $k$ is odd and $p \le k-1$, or with $k$ is even and $p \le k$,
	\end{itemize}
	then $T_k f \in L^s(\Omega_D)$, and there exists a constant $M_k>0$ such that  
	\begin{align*}
		\|T_k f\|_{L^s} \leq M_k \|f\|_{L^1}.
	\end{align*}
\end{theorem}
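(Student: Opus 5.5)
The plan is to use Minkowski's integral inequality (generalized Minkowski) together with the pointwise kernel bounds (\ref{estimate k odd}) and (\ref{estimate k even}) from the proof of Theorem~\ref{exist theorem}. In all cases we have
\begin{align*}
	|T_kf(\boldsymbol{x})|\leqslant \int_{\Omega_D}|\mathcal{E}_{k,\boldsymbol{y}}(\boldsymbol{x})|\,|f(\boldsymbol{y})|\,dV(\boldsymbol{y}).
\end{align*}
Taking the $L^s$ norm in $\boldsymbol{x}$ and applying Minkowski's inequality for integrals gives
\begin{align*}
	\|T_kf\|_{L^s}\leqslant \int_{\Omega_D}\big\|\mathcal{E}_{k,\boldsymbol{y}}(\cdot)\big\|_{L^s(\Omega_D)}\,|f(\boldsymbol{y})|\,dV(\boldsymbol{y})\leqslant \Big(\sup_{\boldsymbol{y}\in\Omega_D}\big\|\mathcal{E}_{k,\boldsymbol{y}}\big\|_{L^s(\Omega_D)}\Big)\|f\|_{L^1}.
\end{align*}
So everything reduces to bounding the $L^s$ norm of the kernel in $\boldsymbol{x}$, uniformly in $\boldsymbol{y}$. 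The constant $M_k$ is then that supremum.

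\textbf{Cases (i) and (ii).} By (\ref{estimate k odd}) and (\ref{estimate k even}), $|\mathcal{E}_{k,\boldsymbol{y}}(\boldsymbol{x})|$ is dominated by $C|\boldsymbol{y}-\boldsymbol{x}_{\underline{\omega}}|^{-a}+C|\boldsymbol{y}-\boldsymbol{x}_{-\underline{\omega}}|^{-a}$. Here $a=p-k+1$ if we keep the literal exponent in the odd case, and $a=p-k$ in the even case. The odd-case threshold $p/(p-k+1)$ is more conservative than the estimate (\ref{estimate k odd}) would give, which is harmless. Then $|\mathcal{E}_{k,\boldsymbol{y}}|^s$ is dominated by $|\boldsymbol{y}-\boldsymbol{x}_{\pm\underline{\omega}}|^{-as}$. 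The conditions on $s$ are exactly $as<p$, so we may write $as=p-\gamma$ with $\gamma>0$. The Hedberg estimate invoked in Lemma~\ref{lemma} then gives
\begin{align*}
	\int_{\Omega_D}|\boldsymbol{y}-\boldsymbol{x}_{\pm\underline{\omega}}|^{-(p-\gamma)}dV(\boldsymbol{x})\leqslant C(\operatorname{diam}B)^{\gamma},
\end{align*}
uniformly in $\boldsymbol{y}$. Taking $s$-th roots bounds the supremum above.

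\textbf{Case (iii).} Here the exponent is nonpositive, so the kernel is dominated by $C|\boldsymbol{y}-\boldsymbol{x}_{\pm\underline{\omega}}|^{k-p}$ or $C|\boldsymbol{y}-\boldsymbol{x}_{\pm\underline{\omega}}|^{k-p-1}$ with nonnegative power. On a bounded domain $|\boldsymbol{y}-\boldsymbol{x}_{\pm\underline{\omega}}|\leqslant 2\operatorname{diam}$ plus the radius, since $\boldsymbol{x}_{\pm\underline{\omega}}$ has the same norm as $\boldsymbol{x}$. Hence the kernel is uniformly bounded by some $K$. This gives $\|T_kf\|_{L^\infty}\leqslant K\|f\|_{L^1}$, and for finite $s$ we get $\|T_kf\|_{L^s}\leqslant K|\Omega_D|^{1/s}\|f\|_{L^1}$.

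\textbf{Main obstacle.} The delicate step is justifying the uniform-in-$\boldsymbol{y}$ bound on $\int_{\Omega_D}|\boldsymbol{y}-\boldsymbol{x}_{\pm\underline{\omega}}|^{-(p-\gamma)}dV(\boldsymbol{x})$. The map $\boldsymbol{x}\mapsto\boldsymbol{x}_{\pm\underline{\omega}}$ is not a translation, and the singular set is $p$-dimensional in $\mathbb{R}^{p+q}$. I would handle this the same way Lemma~\ref{lemma} does, via the Hedberg-type estimate. If more care is needed, I would pass to coordinates $(\boldsymbol{x}_p,r,\underline{\omega})$ and reduce to an integral over a bounded region of $\mathbb{R}^{p+1}$ with a bounded spherical factor, where integrability of $|\cdot|^{-(p-\gamma)}$ is standard.
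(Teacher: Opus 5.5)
Your proposal follows the paper's proof: Minkowski's integral inequality reduces everything to a uniform-in-$\boldsymbol{y}$ bound on $\|\mathcal{E}_{k,\boldsymbol{y}}\|_{L^s(\Omega_D)}$, which comes from the condition $\lambda s<p$ in cases (i)--(ii) and from boundedness of the kernel on a bounded domain in case (iii). The only difference is how you justify the uniform bound. You use the Hedberg estimate of Lemma~\ref{lemma}, with the $(\boldsymbol{x}_p,r,\underline{\omega})$ reduction to a bounded region of $\mathbb{R}^{p+1}$ as a backup, where the paper uses a change of variables and polar coordinates on a ball in $\mathbb{R}^p$; your backup is in fact the more accurate way to handle the fact that $\boldsymbol{x}\mapsto\boldsymbol{x}_{\underline{\omega}}$ is not a translation.
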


\begin{proof}
	Using the pointwise estimates for $\mathcal{E}_{k,\boldsymbol{y}}$ established in \eqref{estimate k odd} and \eqref{estimate k even}, we have
	\begin{align*}
		|\mathcal{E}_{k,\boldsymbol{y}}(\boldsymbol{x})| \leq C \left( \frac{1}{|\boldsymbol{y}-\boldsymbol{x}_{\underline{\omega}}|^\lambda} + \frac{1}{|\boldsymbol{y}-\boldsymbol{x}_{-\underline{\omega}}|^\lambda} \right),
	\end{align*}
	where $\lambda = p - k + 1$ if $k$ is odd, and $\lambda = p - k$ if $k$ is even, we obtain
	\begin{align*}
		|T_k f(\boldsymbol{x})| \leq C \left( \int_{\Omega_D} \frac{|f(\boldsymbol{y})|}{|\boldsymbol{y}-\boldsymbol{x}_{\underline{\omega}}|^\lambda}\,dV(\boldsymbol{y}) + \int_{\Omega_D} \frac{|f(\boldsymbol{y})|}{|\boldsymbol{y}-\boldsymbol{x}_{-\underline{\omega}}|^\lambda}\,dV(\boldsymbol{y}) \right).
	\end{align*}
	These integrals are finite provided $\lambda < p$, hence $T_k f(\boldsymbol{x})$ is well-defined.
	
	We now estimate $\|T_k f\|_{L^s}$ by considering each case separately.
	
	\noindent\textbf{Case (i):} $k$ is odd, $p > k-1$, and $1 \leq s < \frac{p}{p-k+1}$.
	
	Here $\lambda = p - k + 1$. Let $1 \le s < +\infty$ with $\frac{1}{s} + \frac{1}{t} = 1$. By Minkowski's inequality and Fubini's theorem,
	\begin{align*}
		\|T_k f\|_{L^s} &= \left( \int_{\Omega_D} \left| \int_{\Omega_D} \mathcal{E}_{k,\boldsymbol{y}}(\boldsymbol{x}) f(\boldsymbol{y})\,dV(\boldsymbol{y}) \right|^s dV(\boldsymbol{x}) \right)^{1/s} \\
		&\leq \int_{\Omega_D} \left( \int_{\Omega_D} |\mathcal{E}_{k,\boldsymbol{y}}(\boldsymbol{x})|^s\, dV(\boldsymbol{x}) \right)^{1/s} |f(\boldsymbol{y})|\,dV(\boldsymbol{y}).
	\end{align*}
	Note that
	\begin{align*}
		|\mathcal{E}_{k,\boldsymbol{y}}(\boldsymbol{x})|^s \leq 2^{s-1} C^s \left( \frac{1}{|\boldsymbol{y}-\boldsymbol{x}_{\underline{\omega}}|^{\lambda s}} + \frac{1}{|\boldsymbol{y}-\boldsymbol{x}_{-\underline{\omega}}|^{\lambda s}} \right).
	\end{align*}
	
	To ensure the $L^s$ boundedness, we need the function
	\begin{align*}
		\boldsymbol{x} \mapsto |\mathcal{E}_{k,\boldsymbol{y}}(\boldsymbol{x})|^s
	\end{align*}
	to be integrable uniformly in $\boldsymbol{y}$. Consider the integral
	\begin{align*}
		\int_{\Omega_D} \frac{1}{|\boldsymbol{y}-\boldsymbol{x}_{\underline{\omega}}|^{\lambda s}} dV(\boldsymbol{x}).
	\end{align*}
	By the change of variable $\boldsymbol{z} = \boldsymbol{x}_{\underline{\omega}}$, the integral becomes
	\begin{align*}
		\int_{\Omega_D} \frac{1}{|\boldsymbol{y}-\boldsymbol{x}_{\underline{\omega}}|^{\lambda s}} dV(\boldsymbol{x}) = \int_{\widetilde{\Omega}_D} \frac{1}{|\boldsymbol{y}-\boldsymbol{z}|^{\lambda s}} J(\boldsymbol{z}) dV(\boldsymbol{z}),
	\end{align*}
	where $J(\boldsymbol{z})$ is the Jacobian of the transformation and $\widetilde{\Omega}_D$ is the image of $\Omega_D$ under this mapping. The dominant singularity occurs when $\boldsymbol{z} \to \boldsymbol{y}$, where the integrand behaves similarly to $|\boldsymbol{y}-\boldsymbol{z}|^{-\lambda s}$. 
	
	To analyze this singularity, consider a small ball $B(\boldsymbol{y}, \varepsilon) \subset \mathbb{R}^p$ centered at $\boldsymbol{y}$ with radius $\varepsilon > 0$. In spherical coordinates centered at $\boldsymbol{y}$, we have
	\begin{align*}
		\int_{B(\boldsymbol{y}, \varepsilon)} \frac{1}{|\boldsymbol{y}-\boldsymbol{z}|^{\lambda s}} dV(\boldsymbol{z}) = \sigma_p \int_0^\varepsilon \frac{1}{r^{\lambda s}} \cdot r^{p-1} dr = \sigma_p \int_0^\varepsilon r^{p-1-\lambda s} dr,
	\end{align*}
	where $\sigma_p$ is the surface area of the unit sphere in $\mathbb{R}^p$.
	
	This integral converges if and only if
	\begin{align*}
		\int_0^\varepsilon r^{p-1-\lambda s} dr < \infty \quad \Longleftrightarrow \quad p-1-\lambda s > -1 \quad \Longleftrightarrow \quad \lambda s < p.
	\end{align*}
	
	The condition $\lambda s < p$ becomes
	\begin{align*}
		(p - k + 1)s < p \quad \Longleftrightarrow \quad s < \frac{p}{p - k + 1}.
	\end{align*}
	Under this condition, we have
	\begin{align*}
		\sup_{\boldsymbol{y}\in \overline{\Omega_D}} \left( \int_{\Omega_D} |\mathcal{E}_{k,\boldsymbol{y}}(\boldsymbol{x})|^s\,dV(\boldsymbol{x}) \right)^{1/s} \leq M_{k_1} < +\infty,
	\end{align*}
	and therefore
	\begin{align*}
		\|T_k f\|_{L^s} \leq M_{k_1} \|f\|_{L^1}.
	\end{align*}
	
	\noindent\textbf{Case (ii):} $k$ is even, $p > k$, and $1 \leq s < \frac{p}{p-k}$.
	
	Here $\lambda = p - k$. The proof is analogous to Case (i). The integrability condition becomes
	\begin{align*}
		(p - k)s < p \quad \Longleftrightarrow \quad s < \frac{p}{p - k}.
	\end{align*}
	Thus, there exists a constant $M_{k_2} > 0$ such that
	\begin{align*}
		\|T_k f\|_{L^s} \leq M_{k_2} \|f\|_{L^1}.
	\end{align*}
	
	\noindent\textbf{Case (iii):} We consider two subcases:
	\begin{itemize}
		\item[(a)] $k$ is odd, $p \le k-1$, and $1 \leq s \leq +\infty$.
		\item[(b)] $k$ is even, $p \le k$, and $1 \leq s \leq +\infty$.
	\end{itemize}
	
	For (a), we have $\lambda = p - k + 1 \le 0$, so the kernel $\mathcal{E}_{k,\boldsymbol{y}}(\boldsymbol{x})$ is bounded. That is, there exists a constant $C_k > 0$ such that
	\begin{align*}
		|\mathcal{E}_{k,\boldsymbol{y}}(\boldsymbol{x})| \le C_k.
	\end{align*}
	It follows that
	\begin{align*}
		|T_k f(\boldsymbol{x})| \le C_k \int_{\Omega_D} |f(\boldsymbol{y})|\,dV(\boldsymbol{y}) = C_k \|f\|_{L^1},
	\end{align*}
	so $\|T_k f\|_{L^\infty} \le C_k \|f\|_{L^1}$. For $1 \le s < \infty$, the $L^\infty$ bound and the boundedness of $\Omega_D$ imply that $T_k f \in L^s(\Omega_D)$ and there exists a constant $M_{k_3} > 0$ such that
	\begin{align*}
		\|T_k f\|_{L^s} \le M_{k_3} \|f\|_{L^1}.
	\end{align*}
	
	For (b), $\lambda = p - k \le 0$, so the kernel $\mathcal{E}_{k,\boldsymbol{y}}(\boldsymbol{x})$ is also bounded. The proof is identical to (a), and there exists a constant $M_{k_4} > 0$ such that
	\begin{align*}
		\|T_k f\|_{L^s} \le M_{k_4} \|f\|_{L^1}.
	\end{align*}
	
	Combining all cases, taking $M_k = \max\{ M_{k_1}, M_{k_2}, M_{k_3}, M_{k_4} \}$ yields the desired result.
\end{proof}
 Next, we investigate the behavior of the differential operators $ \overline{\vartheta}^k $ and the integral operators $ T_k $ under the group $ \mathrm{GRAV}(\mathbb{R}^p \oplus \mathbb{R}^q) $, which preserves the geometry of $ p $-symmetric slice domains.

We begin by recalling that the group $\mathrm{GRAV}(\mathbb{R}^p \oplus \mathbb{R}^q)$, as defined in \cite{Xu}, has the following structure
$$
\mathrm{GRAV}\left( \mathbb{R} ^p\oplus \mathbb{R} ^q \right) =\left\langle 
\left( \begin{matrix}
	1&		b\\
	0&		1\\
\end{matrix} \right) ,
\left( \begin{matrix}
	a&		0\\
	0&		a^{-1}\\
\end{matrix} \right) ,
\left( \begin{matrix}
	0&		1\\
	-1&		0\\
\end{matrix} \right) ,
\left( \begin{matrix}
	\lambda&		0\\
	0&		\lambda ^{-1}\\
\end{matrix} \right) 
\right\rangle,
$$
where $b \in \mathbb{R}^p$, $a\in \mathbb{S}$, and $\lambda \in \mathbb{R} \backslash \left\{ 0 \right\} $. This group gives rise to M\"obius transformations of the form
\begin{align*}
	\varphi(\boldsymbol{x}) := (a \boldsymbol{x} + b)(c \boldsymbol{x} + d)^{-1}, \quad \text{with } M = \begin{pmatrix} a & b \\ c & d \end{pmatrix} \in \mathrm{GRAV}(\mathbb{R}^p \oplus \mathbb{R}^q),
\end{align*}
accompanied by the conformal weight
\begin{align*}
	J_k(M,\boldsymbol{x}) := \frac{\overline{c \boldsymbol{x} + d}}{|c \boldsymbol{x} + d|^{p-k+2}},\ J_{-k}(M,\boldsymbol{x}) := \frac{{c \boldsymbol{x} + d}}{|c \boldsymbol{x} + d|^{p+k+2}}.
\end{align*}
Here, $\boldsymbol{x} = \boldsymbol{x}_p + \underline{\omega} r \in \Omega$ denotes a $(p,q)$-type variable with $\boldsymbol{x}_p \in \mathbb{R}^p$, $\underline{\omega} \in \mathbb{S}^{q-1} \subset \mathbb{R}^q$, and $r > 0$.

 \begin{theorem}[Conformal covariance of $T_k$]
 	Let $g \in C^1(\Omega _D)$, and let $M \in \mathrm{GRAV}(\mathbb{R}^p \oplus \mathbb{R}^q)$. Then the Teodorescu transform satisfies the conformal covariance identity
 	\begin{align*}
 	(T_k g)(\varphi(\boldsymbol{x})) = J^{-1}_k(M, \boldsymbol{x}) \cdot T_k\left[ J_{-k}(M,\cdot) \cdot g \circ \varphi \right](\boldsymbol{x}).
 	\end{align*}
 \end{theorem}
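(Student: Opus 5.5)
The proof will be a change of variables in the integral defining $T_k$, combined with a transformation law for the kernel $\mathcal{E}_{k,\boldsymbol{y}}(\boldsymbol{x})$ under $\varphi$. Since $\mathrm{GRAV}(\mathbb{R}^p\oplus\mathbb{R}^q)$ is generated by the four listed matrices, and both sides of the identity compose correctly, it suffices to check the statement for each generator. The composition check is routine: it uses the cocycle property of $J_k$ and $J_{-k}$, namely
\[
J_{\pm k}(M_1M_2,\boldsymbol{x})=J_{\pm k}(M_2,\boldsymbol{x})\,J_{\pm k}\bigl(M_1,\varphi_{M_2}(\boldsymbol{x})\bigr)
\]
up to the order of Clifford multiplication dictated by the definitions. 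The generators are the translation $\boldsymbol{x}\mapsto\boldsymbol{x}+b$ with $b\in\mathbb{R}^p$, the rotation-type map $\boldsymbol{x}\mapsto a\boldsymbol{x}a$, the dilation $\boldsymbol{x}\mapsto\lambda^2\boldsymbol{x}$, and the inversion $\boldsymbol{x}\mapsto-\boldsymbol{x}^{-1}$. For each one I will check two things: that $\varphi$ maps $p$-symmetric sets to $p$-symmetric sets, and that it maps the slice points $\boldsymbol{x}_{\pm\underline{\omega}}$ to the corresponding slice points of $\varphi(\boldsymbol{x})$. That second property lets the representation-formula structure of $\mathcal{E}_k$ (the $\alpha$-term and the $\beta$-term) be handled term by term.

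The central step is a kernel identity. For $\boldsymbol{u}=\varphi(\boldsymbol{x})$ and $\boldsymbol{v}=\varphi(\boldsymbol{y})$, the classical identity
\[
\boldsymbol{v}-\boldsymbol{u}=\bigl(\widetilde{c\boldsymbol{x}+d}\bigr)^{-1}(\boldsymbol{y}-\boldsymbol{x})(c\boldsymbol{y}+d)^{-1},
\qquad
|\boldsymbol{v}-\boldsymbol{u}|=\frac{|\boldsymbol{y}-\boldsymbol{x}|}{|c\boldsymbol{x}+d|\,|c\boldsymbol{y}+d|},
\]
applied with $\boldsymbol{x}$ replaced by $\boldsymbol{x}_{\pm\underline{\omega}}$, gives the poly-monogenic kernel transformation
\[
\mathcal{E}_{k,\varphi(\boldsymbol{y})}(\varphi(\boldsymbol{x}))=J_k(M,\boldsymbol{x})^{-1}\,\mathcal{E}_{k,\boldsymbol{y}}(\boldsymbol{x})\,\widehat{J}(M,\boldsymbol{y}).
\]
Here $\widehat{J}(M,\boldsymbol{y})$ is a weight in $\boldsymbol{y}$ which, multiplied by the Jacobian of $\varphi$, equals $|c\boldsymbol{y}+d|^{-2(p+q)}$ in the full $(p+q)$-dimensional volume. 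For odd $k$ the vector factor produces the $\overline{c\boldsymbol{x}+d}$ appearing in $J_k$; for even $k$ only the modulus powers appear.

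With the kernel law in hand, I will substitute $\boldsymbol{v}=\varphi(\boldsymbol{y})$ in $(T_kg)(\varphi(\boldsymbol{x}))=\int\mathcal{E}_{k,\boldsymbol{v}}(\varphi(\boldsymbol{x}))g(\boldsymbol{v})\,dV(\boldsymbol{v})$. Taking $J_k^{-1}(M,\boldsymbol{x})$ outside the integral leaves $\int\mathcal{E}_{k,\boldsymbol{y}}(\boldsymbol{x})\,\widehat{J}(M,\boldsymbol{y})\,|\det\varphi'(\boldsymbol{y})|\,g(\varphi(\boldsymbol{y}))\,dV(\boldsymbol{y})$, and this should be identified with $T_k[J_{-k}(M,\cdot)\,g\circ\varphi](\boldsymbol{x})$. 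The domain should be read as $\varphi^{-1}(\Omega_D)$, assuming this is again $p$-symmetric; otherwise I will restrict to $M$ that preserve $\Omega_D$.

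The main obstacle is the exponent bookkeeping together with the slice structure. The weights $J_{\pm k}$ are stated with the exponents $p\mp k+2$ and $p+k+2$, which are adapted to dimension $p+1$, whereas $T_k$ integrates over $\mathbb{R}^{p+q}$, whose Jacobian gives $|c\boldsymbol{y}+d|^{-2(p+q)}$. The powers will match only if $T_k$ is effectively computed slice-wise, in $dV_{\underline{\eta}}$ over $(p+1)$-dimensional slices, as in Theorem~\ref{Borel-Pompeiu}. My first attempt will therefore be to reduce to a single slice $\Omega_{\underline{\eta}}$. There the Jacobian is $|c\boldsymbol{y}+d|^{-2(p+1)}$, and together with the factor $|c\boldsymbol{y}+d|^{p-k+1}$ coming from the kernel and one vector factor $c\boldsymbol{y}+d$, this yields exactly $J_{-k}(M,\boldsymbol{y})$. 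I will also check that the coefficients $\alpha,\beta$, which depend on $\underline{\omega}$, are unchanged under $\varphi$. This holds because each generator sends the slice direction $\underline{\omega}$ to $\pm\underline{\omega}$ or to $a\underline{\omega}a$, consistently for $\boldsymbol{x}$ and $\boldsymbol{y}$. For the inversion, a sign swap between the $\alpha$- and $\beta$-terms may occur, and I will compensate for it using the symmetry of the representation formula.
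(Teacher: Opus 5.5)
Your route is the same as the paper's: substitute $\boldsymbol y=\varphi(\boldsymbol z)$, push a transformation law for $\mathcal{E}_k$ through the integral, and absorb the Jacobian into the inner weight. The paper cites Ryan's Theorem 6 for $\mathcal{E}_{k,\varphi(\boldsymbol z)}(\varphi(\boldsymbol x))=J_k^{-1}(M,\boldsymbol x)\,\mathcal{E}_{k,\boldsymbol z}(\boldsymbol x)\,J_k^{-1}(M,\boldsymbol z)$ and asserts $|\det D\varphi(\boldsymbol z)|=J_k(M,\boldsymbol z)J_{-k}(M,\boldsymbol z)=|c\boldsymbol z+d|^{-2(p+1)}$. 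You are right that the exponents are the weak point, but your repair does not work, because you have not tracked the homogeneity of the kernel itself.

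In Definition~\ref{P-Cauchy kernel} the exponents are $p-k+1$ (odd $k$) and $p-k$ (even $k$); for instance, each term of $\mathcal{E}_1$ is $(\boldsymbol y-\boldsymbol x')/(\sigma_p|\boldsymbol y-\boldsymbol x'|^{p})$. So $\mathcal{E}_k$ is homogeneous of degree $k-p$ in $(\boldsymbol x,\boldsymbol y)$, like the poly-monogenic kernel of $\mathbb{R}^p$. By contrast, $J_{\pm k}$, with exponents $p\mp k+2$, are calibrated to the $\mathbb{R}^{p+1}$ kernel of degree $k-p-1$. Your slice count is therefore off by one power on each side:
\emph{$\boldsymbol y$-side.} For odd $k$ the kernel produces the factor $(c\boldsymbol y+d)^{-1}|c\boldsymbol y+d|^{p-k+1}$. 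Multiplied by $|c\boldsymbol y+d|^{-2(p+1)}$ this is $\overline{c\boldsymbol y+d}\,|c\boldsymbol y+d|^{-(p+k+3)}$, of modulus $|c\boldsymbol y+d|^{-(p+k+2)}$, while $|J_{-k}(M,\boldsymbol y)|=|c\boldsymbol y+d|^{-(p+k+1)}$.
\emph{$\boldsymbol x$-side.} The kernel produces a factor of modulus $|c\boldsymbol x+d|^{p-k}$, not $|J_k^{-1}(M,\boldsymbol x)|=|c\boldsymbol x+d|^{p-k+1}$. For the inversion the discrepancy is $|\boldsymbol x|$, which cannot be pushed into $\widehat J(M,\boldsymbol y)$, so your displayed kernel law already fails.
\emph{Even $k$.} Under the inversion the kernel picks up only the real factors $|\boldsymbol x|^{p-k}|\boldsymbol y|^{p-k}$, whereas $J_k^{-1}(M,\boldsymbol x)=-|\boldsymbol x|^{p-k}\boldsymbol x$ is a vector.

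More careful bookkeeping cannot fix this, because the identity as stated is false, and a single generator shows it. Take $M=\mathrm{diag}(\lambda,\lambda^{-1})$. Then $\varphi(\boldsymbol x)=\lambda^2\boldsymbol x$ and $c\boldsymbol x+d=\lambda^{-1}$, so $J_k^{-1}(M,\boldsymbol x)\,J_{-k}(M,\boldsymbol y)=|\lambda|^{2k}$. On the other hand, $\mathcal{E}_{k,\lambda^2\boldsymbol y}(\lambda^2\boldsymbol x)=|\lambda|^{2(k-p)}\mathcal{E}_{k,\boldsymbol y}(\boldsymbol x)$, assuming (as the representation formula indicates) that $\alpha,\beta$ depend only on slice directions, which dilations preserve. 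Also $dV(\lambda^2\boldsymbol y)=|\lambda|^{2(p+q)}dV(\boldsymbol y)$. So the left-hand side is $|\lambda|^{2q}$ times the right-hand side, and the two differ whenever $|\lambda|\neq1$ and the integral is nonzero. Integrating over a $(p+1)$-dimensional slice instead still leaves a factor $|\lambda|^{2}$, and it replaces the $T_k$ of the statement by a different operator anyway. The paper's two cited identities belong to the genuine $(p+1)$-dimensional kernel and volume and fail the same check, so following the paper does not rescue the argument.

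What is missing is a corrected statement. You can either:
\emph{(a)} keep $\mathcal{E}_k$ and $dV$ and recompute the weights from the Jacobian $|c\boldsymbol y+d|^{-2(p+q)}$; for odd $k$ the inner weight becomes $\overline{c\boldsymbol y+d}\,|c\boldsymbol y+d|^{-(p+k+2q+1)}$ and the outer one has modulus $|c\boldsymbol x+d|^{p-k}$; or
\emph{(b)} replace $\mathcal{E}_k$ by the $(p+1)$-dimensional poly-monogenic kernel and $dV$ by $dV_{\underline\eta}$.
In both cases the even-$k$ weights must be redone separately. Only then does your change of variables go through. Even then, your claim that $\alpha,\beta$ are unchanged needs adjusting: if they are $\tfrac12(1\mp\underline\omega\underline\eta)$, then under $\boldsymbol x\mapsto a\boldsymbol x a$ they are conjugated by $a$, and this must be absorbed into the weights.
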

 
 \begin{proof}
 	We begin by evaluating
 	\begin{align*}
 		(T_k g)(\varphi(\boldsymbol{x})) = \int_{\varphi(\Omega_D)} \mathcal{E}_{k, \boldsymbol{y}}(\varphi(\boldsymbol{x})) \cdot g(\boldsymbol{y}) \, dV(\boldsymbol{y}).
 	\end{align*}
 	Change variables by setting $ \boldsymbol{y} = \varphi(\boldsymbol{z}) $, so that $ \boldsymbol{z} = \varphi^{-1}(\boldsymbol{y}) $, and write
 	\begin{align*}
 		dV(\boldsymbol{y}) = |\det D\varphi(\boldsymbol{z})| \, dV(\boldsymbol{z}).
 	\end{align*}
 	Thus,
 	\begin{align*}
 		(T_k g)(\varphi(\boldsymbol{x})) = \int_{\Omega_D} \mathcal{E}_{k, \varphi(\boldsymbol{z})}(\varphi(\boldsymbol{x})) \cdot g(\varphi(\boldsymbol{z})) \cdot |\det D\varphi(\boldsymbol{z})| \, dV(\boldsymbol{z}).
 	\end{align*}
 	
 	According to the known transformation property of the Cauchy kernel in Clifford analysis in \cite[Theorem 6]{Ryan}, the poly-partial-slice Cauchy kernel satisfies
 	\begin{align*}
 		\mathcal{E}_{k, \varphi(\boldsymbol{z})}(\varphi(\boldsymbol{x})) = J_k^{-1}(M, \boldsymbol{x}) \cdot \mathcal{E}_{k, \boldsymbol{z}}(\boldsymbol{x}) \cdot J^{-1}_k(M, \boldsymbol{z}).
 	\end{align*}
 Substituting into the integral
 \begin{align*}
 	&(T_k g)(\varphi(\boldsymbol{x})) \\
 	=& \int_{\Omega_D} \mathcal{E}_{k, \varphi(\boldsymbol{z})}(\varphi(\boldsymbol{x})) \, g(\varphi(\boldsymbol{z})) \, |\det D\varphi(\boldsymbol{z})| \, dV(\boldsymbol{z}) \\
 	=& \int_{\Omega_D} J^{-1}_k(M, \boldsymbol{x}) \cdot \mathcal{E}_{k, \boldsymbol{z}}(\boldsymbol{x}) \cdot J^{-1}_k(M, \boldsymbol{z}) \cdot g(\varphi(\boldsymbol{z})) \, |\det D\varphi(\boldsymbol{z})| \, dV(\boldsymbol{z}) \\
 	=& J^{-1}_k(M, \boldsymbol{x}) \cdot \int_{\Omega_D} \mathcal{E}_{k, \boldsymbol{z}}(\boldsymbol{x}) \cdot J_{-k}(M, \boldsymbol{z}) \cdot g(\varphi(\boldsymbol{z}))  dV(\boldsymbol{z}),
 \end{align*}
where $|\det D\varphi(\boldsymbol{z})|=J_k(M, \boldsymbol{z})J_{-k}(M, \boldsymbol{z}) $ is the Jacobian, we conclude that
 \begin{align*}
 	(T_k g)(\varphi(\boldsymbol{x})) = J^{-1}_k(M, \boldsymbol{x}) \cdot T_k\left[ J_{-k}(M, \cdot) \cdot g \circ \varphi \right](\boldsymbol{x}),
 \end{align*}
 this completes the proof.
 \end{proof}

 Then we study how $T_k$ and the differential operator $\overline{\vartheta}$ interact. 
 The next theorem shows that they can be interchanged inside the integral.
 
\begin{theorem}
	Let $1 \leq s < \infty$, and let $f \in L^s(\Omega_D, \mathbb{R}_{p+q})$. Then for almost every $\boldsymbol{x} \in \Omega_D$, we have
	\begin{align*}
		\overline{\vartheta} \left( T_k f \right)(\boldsymbol{x}) = \int_{\Omega_D} \left( \overline{\vartheta} \mathcal{E}_{k,\boldsymbol{y}}(\boldsymbol{x}) \right) f(\boldsymbol{y}) \, dV(\boldsymbol{y}).
	\end{align*}
\end{theorem}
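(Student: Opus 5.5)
We need to prove that $\overline{\vartheta}$ commutes with the integral in $T_k f$, almost everywhere, for $f\in L^s$. Write $\overline{\vartheta}=D_{\boldsymbol{x}_p}+\frac{\underline{\boldsymbol{x}}_q}{|\underline{\boldsymbol{x}}_q|^2}\mathbb{E}_{\underline{\boldsymbol{x}}_q}$. On $\mathbb{R}^{p+q}_*$ both pieces are first-order operators with smooth coefficients. The statement therefore reduces to differentiating under the integral sign with respect to each coordinate $x_i$, and then multiplying by the locally bounded coefficients $e_i$, $x_j/|\underline{\boldsymbol{x}}_q|^2$. I would interpret $\overline{\vartheta}(T_kf)$ in the distributional (weak) sense first and then identify it with the stated integral a.e. The natural case split is the one used in Theorem \ref{exist theorem}: $\lambda\le 0$ versus $\lambda>0$, with $\lambda=p-k+1$ or $p-k$ according to the parity of $k$.

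\textbf{Step 1 (kernel bounds).} From the proof of Theorem \ref{theta K=K-1}, $\overline{\vartheta}\mathcal{E}_{k,\boldsymbol{y}}=\mathcal{E}_{k-1,\boldsymbol{y}}$ for $k>1$, and for $k=1$ it is $0$ away from the singular set. I would record that each single partial derivative $\partial_{x_i}\mathcal{E}_{k,\boldsymbol{y}}(\boldsymbol{x})$ is bounded by
\begin{align*}
C\left(|\boldsymbol{y}-\boldsymbol{x}_{\underline{\omega}}|^{-(\lambda+1)}+|\boldsymbol{y}-\boldsymbol{x}_{-\underline{\omega}}|^{-(\lambda+1)}\right),
\end{align*}
locally uniformly for $\boldsymbol{x}$ in compact subsets of $\mathbb{R}^{p+q}_*$. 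This uses that $\underline{\omega}=\underline{\boldsymbol{x}}_q/|\underline{\boldsymbol{x}}_q|$ has bounded derivatives away from $\mathbb{R}^p$.

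\textbf{Step 2 (nonsingular case, $\lambda+1\le 0$ or more generally $k>p+1$).} Here the derivatives of the kernel are bounded on $\Omega_D\times\Omega_D$. Since $\Omega_D$ is bounded, $f\in L^s\subset L^1$. The dominated convergence theorem applied to difference quotients then gives classical differentiation under the integral sign at every $\boldsymbol{x}$.

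\textbf{Step 3 (weakly singular case).} I would truncate. Let $\chi_\varepsilon$ be a smooth cutoff vanishing on the $\varepsilon$-neighbourhood of the set $\{\boldsymbol{y}=\boldsymbol{x}_{\pm\underline{\omega}}\}$, and set $T_k^\varepsilon f=\int\chi_\varepsilon\mathcal{E}_{k,\boldsymbol{y}}(\boldsymbol{x})f(\boldsymbol{y})\,dV$. For $T_k^\varepsilon f$ the kernel is smooth, so $\overline{\vartheta}T_k^\varepsilon f$ is obtained by differentiating under the integral. Pair this with a test function $\phi\in C_c^\infty(\Omega_D)$ and use Fubini. As $\varepsilon\to0$:
\begin{itemize}
\item $T_k^\varepsilon f\to T_kf$ in $L^1_{loc}$ by Lemma \ref{lemma}.
\item The main terms $\int\chi_\varepsilon\,\overline{\vartheta}\mathcal{E}_{k,\boldsymbol{y}}\,f$ converge to $\int\overline{\vartheta}\mathcal{E}_{k,\boldsymbol{y}}f\,dV$ in $L^1_{loc}$. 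For $k>1$ this again follows from Lemma \ref{lemma}, because $\overline{\vartheta}\mathcal{E}_k=\mathcal{E}_{k-1}$ has singularity of order $\lambda+1<p$ when $k\ge2$.
\item The error terms involving $\overline{\vartheta}\chi_\varepsilon$ are supported in a shell of width $\varepsilon$ and are bounded by $\varepsilon^{-1}\cdot\varepsilon^{-\lambda}$ times the shell volume. Since $\lambda<p-1$ when $k\ge2$, this is $O(\varepsilon^{p-1-\lambda})\to0$.
\end{itemize}
Weak derivatives thus equal the stated integral as $L^1_{loc}$ functions, which gives the a.e. identity. For $s>1$ one may alternatively use Hölder's inequality with the $L^t$-integrability of the kernel derivatives.

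\textbf{Main obstacle.} The difficulty is the case $k=1$ and the geometry of the singular set. For $k=1$, $\overline{\vartheta}\mathcal{E}_1$ is a Calderón–Zygmund-type singularity of order $p$ on $p+1$-dimensional slices. The boundary terms in Step 3 then do not vanish: they produce a delta-type contribution (essentially the Cauchy–Pompeiu term, giving $f(\boldsymbol{x})$ as in Theorem \ref{Borel-Pompeiu}). So I expect the identity to hold as stated only for $k\ge2$, or for $k=1$ with the integral understood as a principal value plus a $f(\boldsymbol{x})$ correction. I would treat $k\ge2$ as the main case and handle $k=1$ separately via the Borel–Pompeiu formula.

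A second delicate point is the singular set itself. The singularities of $\boldsymbol{y}\mapsto|\boldsymbol{y}-\boldsymbol{x}_{\pm\underline{\omega}}|^{-\lambda}$ in $\mathbb{R}^{p+q}$ depend on the direction $\underline{\omega}$. The integrability exponents must therefore be checked in the full dimension, rather than only the $p$-dimensional count used in Lemma \ref{lemma}. I would reuse the lemma's bounds as given.
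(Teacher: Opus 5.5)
Your proposal does not prove the case $k=1$, which is part of the statement. The reason you give for setting it aside, a delta-type term, comes from a miscount rather than a real obstruction. The paper handles every $k\ge1$ at once. It bounds $|\overline{\vartheta}\mathcal{E}_{k,\boldsymbol{y}}(\boldsymbol{x})|\le C|\boldsymbol{y}-\boldsymbol{x}_{\pm\underline{\omega}}|^{-(p-k+1)}$, checks local integrability in spherical coordinates of $\mathbb{R}^{p+q}$ centred at $\boldsymbol{x}_{\underline{\omega}}$ (obtaining $\int_0^\varepsilon s^{k+q-2}\,ds<\infty$), and then invokes dominated convergence. Your truncation and weak-derivative scheme is a different, and more careful, way of justifying the interchange. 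Your treatment of $k\ge2$ is fine, because your $p$-dimensional shell count is merely pessimistic there.

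Two things go wrong at $k=1$.

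First, you take the order of $\mathcal{E}_k$ for odd $k$ to be $\lambda=p-k+1$. That is only the generous exponent of the $L^s$ theorem. By \eqref{estimate k odd} the true order is $p-k$, since the numerator $\boldsymbol{y}-\boldsymbol{x}_{\pm\underline{\omega}}$ cancels one power. So $\mathcal{E}_1$ has order $p-1$, not $p$.

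Second, $T_k$ integrates against the $(p+q)$-dimensional volume $dV$. For fixed $\boldsymbol{x}$, the kernel of Definition~\ref{P-Cauchy kernel} is singular only at the two points $\boldsymbol{y}=\boldsymbol{x}_{\pm\underline{\omega}}$ (note $\boldsymbol{x}_{\underline{\omega}}=\boldsymbol{x}$). Your shells therefore have volume of order $\varepsilon^{p+q}$, and there is no $(p+1)$-dimensional Calder\'on--Zygmund structure.

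Redo Step~3 with this count, for $\boldsymbol{x}$ in a compact set away from $\mathbb{R}^p$, $k\le p$, and errors measured in $L^1$ of that set:
\begin{itemize}
\item the cut-off error is $O(\varepsilon^{-1}\varepsilon^{-(p-k)}\varepsilon^{p+q})\|f\|_{L^1}=O(\varepsilon^{q+k-1})\|f\|_{L^1}$;
\item the truncation error is $O(\varepsilon^{q+k})\|f\|_{L^1}$;
\item the main term converges at rate $O(\varepsilon^{q+k-1})\|f\|_{L^1}$, because $|\overline{\vartheta}\mathcal{E}_k|\lesssim|\boldsymbol{y}-\boldsymbol{x}_{\pm\underline{\omega}}|^{-(p-k+1)}$ is integrable in $\mathbb{R}^{p+q}$.
\end{itemize}
Since $q+k-1\ge1$, all three vanish for $k=1$ as well. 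At $k=1$ you must use this direct bound on $\overline{\vartheta}\mathcal{E}_1$: Theorem~\ref{theta K=K-1} requires $k>1$, and Lemma~\ref{lemma} requires order strictly below $p$.

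Your suspicion is understandable, since Theorem~\ref{kernel=0} and Corollary~\ref{4.6} treat $\mathcal{E}_1$ as a fundamental solution. But with the kernel and measure actually used in $T_k$, the singularity of $\mathcal{E}_1$ has order $p-1<p+q-1$, so no boundary contribution survives. Your own Step~3, done in $\mathbb{R}^{p+q}$, gives the identity for every $k\ge1$.
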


\begin{proof}
	We prove the statement in the case where \(k\) is odd; the even case can be treated in a similar way.
	
	To justify the interchange of the differential operator $\overline{\vartheta}$ and the integral, we apply H\"older's inequality with $1/s + 1/s' = 1$
	\begin{align*}
		|\overline{\vartheta} T_k f(\boldsymbol{x})| \leq \left( \int_{\Omega_D} |\overline{\vartheta} \mathcal{E}_{k,\boldsymbol{y}}(\boldsymbol{x})|^{s'} \, dV(\boldsymbol{y}) \right)^{1/s'} \cdot \|f\|_{L^s(\Omega_D)}.
	\end{align*}
	From the explicit form of $\mathcal{E}_k$ for odd $k$, we have the pointwise estimate
	\begin{align*}
		\left| \overline{\vartheta} \mathcal{E}_{k,\boldsymbol{y}}(\boldsymbol{x}) \right| \leq C \left( \frac{1}{|\boldsymbol{y} - \boldsymbol{x}_{\underline{\omega}}|^{p - k + 1}} + \frac{1}{|\boldsymbol{y} - \boldsymbol{x}_{-\underline{\omega}}|^{p - k + 1}} \right).
	\end{align*}
	To verify the integrability near the singularity, we perform a change to spherical coordinates. Set $ \boldsymbol{y} = \boldsymbol{x}_{\underline{\omega}} + s\boldsymbol{t} $, where $\boldsymbol{t} \in \mathbb{S}^{p+q-1}$, then
	\begin{align*}
		\int_{B_\varepsilon(\boldsymbol{x}_{\underline{\omega}})} \frac{1}{|\boldsymbol{y} - \boldsymbol{x}_{\underline{\omega}}|^{p-k+1}} \, dV(\boldsymbol{y}) &= \int_0^\varepsilon \int_{\mathbb{S}^{p+q-1}} \frac{1}{s^{p-k+1}} \cdot s^{p+q-1} \, d\sigma(\boldsymbol{t}) \, ds \\
		&= C \int_0^\varepsilon s^{k+q - 2} \, ds,
	\end{align*}
	which converges near zero provided $k + q - 2 > -1$, i.e., $k \geq \max(1, 2 - q)$. Since we assumed $k \geq 1$, the integral is finite.
	
	Furthermore, since $f \in L^s(\Omega_D)$ and has compact support, and $\overline{\vartheta} \mathcal{E}_{k,\boldsymbol{y}}(\boldsymbol{x})$ is locally integrable in $\boldsymbol{y}$ for fixed $\boldsymbol{x}$, the dominated convergence theorem applies and yields the result
	\begin{align*}
		\overline{\vartheta} T_k f(\boldsymbol{x}) = \int_{\Omega_D} \left( \overline{\vartheta} \mathcal{E}_{k,\boldsymbol{y}}(\boldsymbol{x}) \right) f(\boldsymbol{y}) \, dV(\boldsymbol{y}).
	\end{align*}
\end{proof}

The previous theorem shows that the differential operator $\overline{\vartheta}$ can be interchanged with the integral in $T_k$.
A natural question is whether this commutation still holds when applying $\overline{\vartheta}$ multiple times.
The following corollary confirms this by extending the result to $\overline{\vartheta}^m$. 
\begin{corollary}\label{4.6}
	Let $ m \leq k $, and $1 \leq s < \infty $. Then for any $ f \in L^s(\Omega_D, \mathbb{R}_{p+q}) $, we have
	\begin{align*}
		\overline{\vartheta}^m \left( T_k f \right)(\boldsymbol{x}) = \int_{\Omega_D} \left( \overline{\vartheta}^m \mathcal{E}_{k,\boldsymbol{y}}(\boldsymbol{x}) \right) f(\boldsymbol{y}) \, dV(\boldsymbol{y})
	\end{align*}
	for almost every $ \boldsymbol{x} \in \Omega_D $, provided the singularity order of $ \overline{\vartheta}^m \mathcal{E}_k $ is less than $ p $.
	In particular, when $ m = k $, we obtain $ \overline{\vartheta}^k T_k f = f $.
\end{corollary}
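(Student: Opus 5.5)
The natural route is induction on $m$, using the preceding theorem (interchange of $\overline{\vartheta}$ with the integral in $T_k$) together with the recursive identity of Theorem \ref{theta K=K-1}. For $m=1$ the statement is exactly the previous theorem. Assume it holds for some $m<k$. Theorem \ref{theta K=K-1}, applied $m$ times, gives $\overline{\vartheta}^m\mathcal{E}_{k,\boldsymbol{y}}(\boldsymbol{x})=\mathcal{E}_{k-m,\boldsymbol{y}}(\boldsymbol{x})$ for $\boldsymbol{y}\notin[\bx]$, which is a set of measure zero in $\boldsymbol{y}$. Hence the induction hypothesis reads
\begin{align*}
\overline{\vartheta}^m (T_kf)(\boldsymbol{x})=\int_{\Omega_D}\mathcal{E}_{k-m,\boldsymbol{y}}(\boldsymbol{x})f(\boldsymbol{y})\,dV(\boldsymbol{y})=T_{k-m}f(\boldsymbol{x})
\end{align*}
for almost every $\boldsymbol{x}$. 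Applying the previous theorem with $k$ replaced by $k-m\ge 1$ then gives
\begin{align*}
\overline{\vartheta}^{m+1}(T_kf)(\boldsymbol{x})=\int_{\Omega_D}\big(\overline{\vartheta}\mathcal{E}_{k-m,\boldsymbol{y}}(\boldsymbol{x})\big)f(\boldsymbol{y})\,dV(\boldsymbol{y})=\int_{\Omega_D}\big(\overline{\vartheta}^{m+1}\mathcal{E}_{k,\boldsymbol{y}}(\boldsymbol{x})\big)f(\boldsymbol{y})\,dV(\boldsymbol{y}).
\end{align*}
This closes the induction.

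At each stage the hypothesis actually needed is that the dominating kernel has singularity of order less than $p$. That is, the pointwise bounds \eqref{estimate k odd} and \eqref{estimate k even}, with $k$ replaced by $k-m$, must give singularity order $\lambda<p$. This is precisely the proviso in the statement, and it is what makes the H\"older and dominated convergence argument of the previous theorem work at every step. I would also spell out one technical point: the previous theorem is applied to $T_{k-m}f$, which is itself well defined for almost every $\boldsymbol{x}$ by Theorem \ref{exist theorem}.

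The main obstacle is the endpoint case $m=k$ and the claim $\overline{\vartheta}^kT_kf=f$. Here the naive induction produces $\overline{\vartheta}\mathcal{E}_{1,\boldsymbol{y}}=0$ off the singular set, so the pointwise interchange would give $0$, not $f$. The last step must therefore be done distributionally instead of by differentiating under the integral, because the singularity of $\mathcal{E}_1$ has order $p$ on the slice and the proviso fails there. My plan for this step is as follows. Reduce by the induction to $\overline{\vartheta}(T_1 f)=f$. Then prove this identity first for $f\in C^1$ with compact support, using the Borel-Pompeiu formula (Theorem \ref{Borel-Pompeiu}) with $k=1$ on a slice $U_{\underline{\eta}}$ containing the support, so that the boundary terms vanish. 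Finally, extend to $f\in L^s$ by density together with the $L^1\to L^s$ continuity of $T_1$ proved above, interpreting the identity almost everywhere, or in the weak sense.

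The delicate part is matching the sign and the normalisation constants $\alpha C_{1,p}/\sigma_p$ so that the Borel-Pompeiu representation gives exactly $f$ rather than $-f$. I would check this against the $k=1$ case \eqref{C-PF(j=1)}.
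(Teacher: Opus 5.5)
For $m<k$ your argument is correct and is essentially the paper's. The paper also inducts on $m$, starting from the preceding theorem, and justifies each further interchange by a pointwise bound on $\overline{\vartheta}^{m+1}\mathcal{E}_{k,\boldsymbol{y}}$ plus a polar-coordinate integrability check. Your rewriting $\overline{\vartheta}^mT_kf=T_{k-m}f$ via Theorem~\ref{theta K=K-1}, followed by reapplying the preceding theorem with $k-m$ in place of $k$, is a cleaner packaging of the same idea. You are also right that this interchange cannot give the endpoint: at $m=k$ it produces $\int_{\Omega_D}(\overline{\vartheta}^k\mathcal{E}_{k,\boldsymbol{y}})f\,dV=0$ by Theorem~\ref{kernel=0}. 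The paper gives no argument for $\overline{\vartheta}^kT_kf=f$ beyond the words ``in particular''.

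The gap is in your plan for that endpoint.

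\emph{Wrong composition order.} Theorem~\ref{Borel-Pompeiu} with $k=1$, applied to a compactly supported $f$, gives $f(\boldsymbol{x})=-\int_{U_{\underline{\eta}}}\mathcal{E}_{1,\boldsymbol{y}}(\boldsymbol{x})\,\overline{\vartheta}f(\boldsymbol{y})\,dV_{\underline{\eta}}(\boldsymbol{y})$. That is the integral operator applied \emph{after} $\overline{\vartheta}$. Reversing the order needs a separate duality argument, which you do not supply.

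\emph{Wrong domain of integration.} More seriously, Borel--Pompeiu is an integral over a $(p+1)$-dimensional slice against $dV_{\underline{\eta}}$. By contrast, $T_1$ integrates over $\Omega_D\subset\mathbb{R}^{p+q}$ against $dV$, and for $q\ge 2$ the support of $f$ lies in no slice. To see what $T_1$ actually gives, take these three steps:
\begin{itemize}
\item write $\boldsymbol{y}=\boldsymbol{y}_p+\rho\underline{\eta}$ with $\rho>0$, so that $dV=\rho^{q-1}\,dV_{\underline{\eta}}\,d\sigma(\underline{\eta})$;
\item read $\alpha,\beta$ as $\frac12(1\mp\underline{\omega}\underline{\eta})$ from \eqref{Representation Formula};
\item note that $\overline{\vartheta}=D_{\boldsymbol{x}_p}+\underline{\omega}\partial_r$ maps $\frac12(1-\underline{\omega}\underline{\eta})h(\boldsymbol{x}_p+r\underline{\eta})+\frac12(1+\underline{\omega}\underline{\eta})h(\boldsymbol{x}_p-r\underline{\eta})$ to the same expression with $h$ replaced by its slice derivative $D_{\boldsymbol{x}_p}h+\underline{\eta}\partial_th$.
\end{itemize}
Then, even with a correctly normalized slice kernel, the slice identity only yields
\[
\overline{\vartheta}T_1f(\boldsymbol{x})=\pm\frac{r^{q-1}}{2}\int_{\mathbb{S}}(1-\underline{\omega}\underline{\eta})\,f(\boldsymbol{x}_p+r\underline{\eta})\,d\sigma(\underline{\eta}),
\]
which is a weighted average of $f$ over the sphere $[\boldsymbol{x}]$. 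For slice functions this equals $\pm\frac{\sigma_q}{2}r^{q-1}f(\boldsymbol{x})$, with $\sigma_q$ the area of $\mathbb{S}$. For a general $f\in L^s$ it is not $f(\boldsymbol{x})$; take $f$ supported in a small ball. So no density argument can close the step. The endpoint identity holds for the slice operator $f\mapsto\int_{U_{\underline{\eta}}}\mathcal{E}_{1,\boldsymbol{y}}(\cdot)f\,dV_{\underline{\eta}}$, or for slice functions with $dV$ reweighted by $\rho^{1-q}$ and renormalized. As stated for the $(p+q)$-dimensional $T_k$, it has to be restricted rather than proved.

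\emph{Normalization.} Your planned check against \eqref{C-PF(j=1)} also cannot succeed with the displayed exponents. On a $(p+1)$-dimensional slice, $\mathcal{E}_1$ has homogeneity $1-p$. The computation in the proof of Theorem~\ref{theta K=K-1}, run at $k=1$, gives $\overline{\vartheta}\big(\frac{\boldsymbol{y}-\boldsymbol{x}_{\underline{\omega}}}{|\boldsymbol{y}-\boldsymbol{x}_{\underline{\omega}}|^{p}}\big)=|\boldsymbol{y}-\boldsymbol{x}_{\underline{\omega}}|^{-p}\neq 0$. So the exponents must be shifted by one before any choice of $C_{1,p}$ can make $\mathcal{E}_1$ a fundamental solution.
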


\begin{proof}
	We proceed by mathematical induction on $m$. The case $m = 1$ was established in the previous theorem.
	
	We only consider the case when $k$ is odd. Suppose the statement holds for $m$. That is,
	\begin{align*}
		\overline{\vartheta}^m \left( T_k f \right)(\boldsymbol{x}) = \int_{\Omega_D} \left( \overline{\vartheta}^m \mathcal{E}_{k,\boldsymbol{y}}(\boldsymbol{x}) \right) f(\boldsymbol{y}) \, dV(\boldsymbol{y}).
	\end{align*}
	Now consider $\overline{\vartheta}^{m+1}$. We apply $\overline{\vartheta}$ to both sides and obtain
	\begin{align*}
		\overline{\vartheta}^{m+1} (T_k f)(\boldsymbol{x}) = \overline{\vartheta} \int_{\Omega_D} \overline{\vartheta}^m \mathcal{E}_{k,\boldsymbol{y}}(\boldsymbol{x}) f(\boldsymbol{y}) \, dV(\boldsymbol{y}).
	\end{align*}
	We assume the following pointwise estimate holds
	\begin{align*}
		\left| \overline{\vartheta}^{m+1} \mathcal{E}_{k,\boldsymbol{y}}(\boldsymbol{x}) \right| \leq C \left( \frac{1}{|\boldsymbol{y} - \boldsymbol{x}_{\underline{\omega}}|^{p - k + m + 1}} + \frac{1}{|\boldsymbol{y} - \boldsymbol{x}_{-\underline{\omega}}|^{p - k + m + 1}} \right).
	\end{align*}
	As before, we verify local integrability near $\boldsymbol{x} = \boldsymbol{y}$ via spherical coordinates. Set $\boldsymbol{y} = \boldsymbol{x}_{\underline{\omega}} + s\boldsymbol{t}$, and compute
	\begin{align*}
		\int_{B_\varepsilon(\boldsymbol{x}_{\underline{\omega}})} \frac{1}{|\boldsymbol{y} - \boldsymbol{x}_{\underline{\omega}}|^{p - k + m + 1}} \, dV(\boldsymbol{y}) &= \int_0^\varepsilon \int_{\mathbb{S}^{p+q-1}} \frac{1}{s^{p - k + m + 1}} \cdot s^{p+q-1} \, d\sigma(\boldsymbol{t}) \, ds \\
		&= C \int_0^\varepsilon s^{k + q - m - 2} \, ds,
	\end{align*}
	which is finite near 0 provided $k + q - m - 2 > -1$, i.e., $m < k + q -1$. Thus the integral is absolutely convergent.
	
	Therefore,
	\begin{align*}
		\overline{\vartheta}^{m+1} (T_k f)(\boldsymbol{x}) = \int_{\Omega_D} \overline{\vartheta}^{m+1} \mathcal{E}_{k,\boldsymbol{y}}(\boldsymbol{x}) f(\boldsymbol{y}) \, dV(\boldsymbol{y}).
	\end{align*}
	This completes the proof.
\end{proof}
\begin{remark}
	As a consequence of Corollary~\ref{4.6} and the conformal covariance of $T_k$, since $\overline{\vartheta}^k$ is the left inverse of $T_k$ as stated in Corollary \ref{4.6}, we can observe that the differential operator $\overline{\vartheta}^k$ satisfies the following conformal covariance identity.
	\begin{align*}
		\left( \overline{\vartheta}^k f \right)(\varphi(\boldsymbol{x})) 
		= J_{-k}^{-1}(M, \boldsymbol{x}) \cdot \overline{\vartheta}^k J_k(M, \bx) \cdot \left[ f \circ \varphi \right](\boldsymbol{x}),
	\end{align*}
	which shows that $ \overline{\vartheta}^k $ transforms covariantly under $\mathrm{GRAV}(\mathbb{R}^p \oplus \mathbb{R}^q) $, and this coincides with the result obtained in \cite[Theorem 3]{DX}.
\end{remark}
\subsection*{Acknowledgments}
The work of Chao Ding is supported by National Natural Science Foundation of China (No. 12271001,12571078). 
 \subsection*{Data Availability}
No new data were created or analysed during this study. Data sharing is not applicable to this article.


\end{document}